\documentclass[final]{IEEEtran}
\IEEEoverridecommandlockouts

\usepackage[margin=0.75in]{geometry}
\usepackage{amsmath,amssymb, ntheorem}
\usepackage{psfrag}
\usepackage{cite}
\usepackage{graphicx}
\usepackage{subcaption}
\usepackage[font=small,labelfont=bf,belowskip=0pt]{caption}
\usepackage{graphics}
\usepackage{verbatim}
\usepackage{epstopdf}
\usepackage{float}
\usepackage{framed}
\usepackage{textcomp}
\usepackage{bbm}
\usepackage{algorithm}
\usepackage{algorithmicx}
\usepackage{algpseudocode}
\usepackage{accents}
\usepackage{enumerate}
\usepackage{stmaryrd}
\usepackage{verbatim}

\restylefloat{figure}
\usepackage{color,xspace}
\usepackage{tikz}
\usepackage{upgreek}
\usepackage{mathrsfs}
\usetikzlibrary{automata,positioning,shapes,fit,patterns}

\usepackage[normalem]{ulem}
\makeatletter
\renewtheoremstyle{plain}
{\item{\theorem@headerfont ##1\ ##2\theorem@separator}~}
{\item{\theorem@headerfont ##1\ ##2\ (##3)\theorem@separator}~}
\makeatother

{\theoremheaderfont{\upshape\bfseries}
\theorembodyfont{\normalfont\slshape}
	
\newtheorem{defn}{Definition}
\newtheorem{lem}{Lemma}
\newtheorem{cor}{Corollary}
\newtheorem{theorem}{Theorem}

\newtheorem{prop}{Proposition}
}

\floatstyle{plain}
\restylefloat{figure}
\usepackage{pstool}
\usepackage{url}

\newcommand{\real}{\mathbb{R}}

\newcommand{\lproj}{\llbracket}
\newcommand{\rproj}{\rrbracket}

\newcommand{\Znonnegative}{\mathbbm{Z}_{\geq 0}}

\newcommand{\Nat}{\mathbbm{N}}

\newcommand{\goesto}{\rightarrow}
\newcommand{\curlyleq}{\preccurlyeq}

\newcommand{\curlygeq}{\succcurlyeq}

\newcommand{\Hmax}{H_{\text{max}}}
\newcommand{\Bcap}{B_{\text{cap}}}
\newcommand{\Bcrit}{B_{\text{crit}}}

\renewcommand{\Pr}{\mathbb{P}}

\newcommand{\Amode}[1]{A_{\sigma(#1)}}
\newcommand{\mode}[1]{\sigma(#1)}
\newcommand{\Recurrent}{\mathcal{R}}
\newcommand{\Trans}[2]{\Phi(#1|#2)}
\newcommand{\pmin}{p}

\newcommand{\Ltrans}{\mathscr{L}}
\newcommand{\Gtrans}{\mathscr{G}}
\newcommand{\Pd}{\rho}
\newcommand{\Intensity}{\mathcal{I}}
\newcommand{\Dampening}{\mathcal{D}}
\newcommand{\Lyap}{V}

\newcommand{\Interval}{\mathcal{I}}
\newcommand{\HarvestSpace}{\mathcal{H}}

\newcommand{\NP}{\mathcal{NP}}

\newcommand{\E}{\mathbb{E}}
\newcommand{\Tr}{\operatorname{Tr}}

\newcommand{\MJLSmat}[1]{\psi_{(s^\prime | s)}^{\{#1\}}}

\newcommand{\naive}{na{\"{i}}ve }
\newcommand{\Energy}{\mathcal{E}}
\newcommand{\energy}{\varepsilon}
\newcommand{\energythresh}{\bar{\energy}}

\newcommand{\MembershipEvent}[1]{\mathcal{T}_{#1}}
\newcommand{\Target}{T}

\newcommand{\Mode}{\theta}
\newcommand{\ModeTime}{\tau_{\Mode(j)}}
\newcommand{\ModeChangeCount}{M}

\newcommand{\sympossemidef}{\mathbb{S}_+}

\newcommand{\disturbance}{\omega}
\newcommand{\noise}{\omega}

\newcommand{\oprocendsymbol}{\hbox{$\bullet$}}
\newcommand{\oprocend}{\relax\ifmmode\else\unskip\hfill\fi\oprocendsymbol}

\newenvironment{proof}{\noindent \textbf{Proof :}}{\oprocend}

\newtheorem{remark}{\textbf{Remark}}{}
{}

\pgfdeclarelayer{background}
\pgfdeclarelayer{foreground}
\pgfsetlayers{background,main,foreground}
\begin{document}
	
	\captionsetup{belowskip=0pt} 
	\title{Stability of Control Systems with \\ Feedback from Energy Harvesting Sensors}
	\author{Nicholas J. Watkins, Konstantinos Gatsis, Cameron Nowzari, and George J. Pappas\thanks{N.J. Watkins, K. Gatsis, and G.J. Pappas are with the Department of Electrical and Systems Engineering, University of Pennsylvania, Pennsylvania, PA 19104, USA, {\tt\small \{nwatk,kgatsis,pappasg\}@upenn.edu}; C. Nowzari is with the Department of Electrical and Computer Engineering, George Mason University, Fairfax, VA 22030, USA, {\ttfamily\small cnowzari}@gmu.edu.} }
	\maketitle
	\begin{abstract}
		In this paper, we study the problem of certifying the stability of a closed loop system which receives feedback from an energy harvesting sensor.  This is important, as energy harvesting sensors are recharged stochastically, and may only be able to provide feedback intermittently.  Thus, stabilizing plants with feedback provided by energy harvesting sensors is challenging in that the feedback signal is only available stochastically, complicating the analysis of the closed-loop system.  As the main contribution of the paper, we show that for a broad class of energy harvesting processes and transmission policies, the plant state process can be modeled as a Markov jump linear system (MJLS), which thereby enables a rigorous stability analysis.  We discuss the types of transmission policies and energy harvesting processes which can be accommodated in detail, demonstrating the generality of the results.
	\end{abstract}
	\section{Introduction} \label{sec:intro}
	
	Energy harvesting technology - which allows for a device's battery to be recharged online by interacting with the environment - will play a significant role in the development of future smart technologies.  Indeed, principled use of energy harvesting technologies will allow for the safe use of sensors in remote locations without the need for explicit, periodic maintenance or replacement.  In recent years, much progress has been made in understanding how to use energy harvesting devices in networking and communications applications \cite{Ulukus2015, Sudevalayam2011, Basagni2013}.  However, there is relatively little literature detailing how energy harvesting sensors can be used in control applications, where the closed-loop system's dynamical behavior is of significant importance.   A key desirable property of many control systems is provable closed-loop stability, which often serves as formal means for guaranteeing safe system operation. 
	
	Ensuring the stability of a plant which receives its feedback signal from an energy harvesting sensor is a challenging problem.  Since the sensor's energy is restored by interactions with the environment (e.g. by leveraging vibrations in a mechanical process \cite{Beeby2007a}, differences in temperature between a surface and the environment \cite{Tan2011}, or the presence of ambient solar light \cite{Raghunathan2005,Alippi2008}), it will typically be the case that feedback can only be provided intermittently.  Indeed, in this context, a sensor may only provide a feedback signal with positive probability when it has sufficient energy to transmit the signal.  Since the process by which the sensor's battery is restored (i.e., the \emph{energy harvesting process}) will often have a significant stochastic component, analysis of the plant's state is difficult.  In particular, correlations between the energy harvesting process and the plant state evolution makes the closed-loop dynamics of the system complicated.
	
	Currently, most works which have considered the interface of dynamical systems and energy harvesting sensors have either not explicitly addressed closed-loop stability of the process, or have done so under conservative assumptions.  Indeed, the earliest known work on sensing of dynamical systems with energy harvesting sensors \cite{Nayyar2013} explicitly considers the problem of minimizing the expected value of the state estimation error - it does not explicitly address system stability.  Similarly \cite{Nourian2014b} and \cite{Ozel2016} find conservative conditions under which the estimation error of Kalman filters running on energy harvesting sensors will remain bounded. Works considering the closed loop stability of the system are limited.  
	
	In particular, \cite{Knorn2017, Li2016, Calvo2017} have studied controllers which guarantee closed-loop stability under conservative assumptions. While these do not directly assume that the open-loop system is stable, they indirectly assume that at every time increment in the process, enough energy arrives so that the sensor can communicate reliably enough with the plant to guarantee uniform decay in a norm of the system's plant state.  Since this assumption directly implies that a positive amount of energy is harvested by the sensor at all times with a positive probability, these results are restrictive.  Indeed, it seems that in many practical settings, the energy harvesting process will \emph{not} supply energy to the sensor for long periods of time.  This is indeed the case for solar cells, and also when the sensor is deterministically recharged according to a fixed schedule.  As such, it is clear that a more general stability analysis is needed; we perform such an analysis in this paper.  In particular, we consider the problem of certifying the closed-loop stability of a plant when supplied with feedback in accordance to a fixed, memoryless, energy-causal transmission policy, where the sensor is recharged by process modeled by a function of a Markov process.
	
	\emph{The primary contribution of this paper} is an efficiently computable stability certification method for systems which receive feedback information from an energy harvesting sensor following a known transmission strategy, restored with energy from a known stochastic energy harvesting process.  To accomplish this, we show that for a large class of transmission policies and energy harvesting processes, such systems can be modeled as a Markov jump linear system (MJLS) with a mode transition process which is defined on a state space whose size grows mildly with the size of the energy harvesting processes' transition matrix.  We then adapt  stability results from the MJLS literature to our setting.
	
	In order to demonstrate the generality of the proposed stability certification method, we discuss in detail the types of transmission policies and energy harvesting processes which can be accommodated.  We show that any memoryless transmission policy can be accommodated into our framework.  This is important, as this is a sufficiently broad classification so as to be useful for many systems.  Indeed, we demonstrate that memoryless policies are all which are required in order to stabilize the system when the plant is scalar, and intelligently designed memoryless policies often suffice to stabilize nonscalar plants.  Likewise, we show that any energy harvesting model which can be posed as a function of a finite-state Markov chain can be accommodated into our framework.  This is important, as many important types of energy sources can be modeled as such, as we demonstrate in Section \ref{sec:examples}.  The work presented here differs from our preliminary conference paper \cite{Watkins2017b} in that it extends the technical results from undisturbed scalar plants to arbitrary linear plants subject to stochastic disturbances, and provides an extended discussion on modeling different types of energy harvesting sources within the considered framework.
	
	\paragraph*{\textbf{Organization}} 
	The paper is organized as follows.  The architecture of the system we study is presented in Section \ref{sec:problem}, along with  
	a formal problem statement.  The main results of our paper are contained in Section \ref{sec:stab_cert}, in which we propose a test for certifying the stability of an energy harvesting system under a fixed transmission policy.  Section \ref{sec:principles_policy} provides principles regarding the design of transmission policies for energy harvesting control systems.  Section \ref{sec:examples} contains examples of energy harvesting processes, with each serving to demonstrate the proposed method's applicability to a different potential application.  Section \ref{sec:conclusion} concludes the paper. \oprocend
	
	\paragraph*{\textbf{Notation}}
	We denote by $\Znonnegative$ the set of non-negative integers, and for each $k \in \Znonnegative,$ we denote by $[k]_0$ the set of non-negative integers $\{0,1, ..., k\}.$  We denote by $\lproj k \rproj_{a}^b$ the projection of $k$ into the interval $[a,b].$  Let $\Pr$ be a probability measure, $Q$ an event which is measurable with respect to $\Pr,$ and $X$ a random variable.  We use the notation $\Pr_{X}(Q)$ for the conditional probability of $Q$ given $X$ when writing the explicit expression $\Pr(Q | X)$ is too cumbersome.  \oprocend
	
	\section{Problem Statement}
	\label{sec:problem}
	
	A visual representation of the system architecture we study is shown in Figure \ref{fig:arch}.  This models a setting in which an energy harvesting sensor communicates over a stochastic communication channel to stabilize the evolution of a plant.  The sensor stores energy in its battery, and restores its charge via a stochastic energy harvesting process.  The control designer's role in this system's evolution is in designing transmission policies which determine when and how energy should be used in order to affect the evolution of the plant's state vector.  The principle question we address in this text is that of determining if a chosen transmission policy stabilizes the closed-loop evolution of the plant's state.  We now detail mathematical models for each component of the system, and provide a formal problem statement.
	\begin{figure}[!t]
		\includegraphics[width=0.48\textwidth]{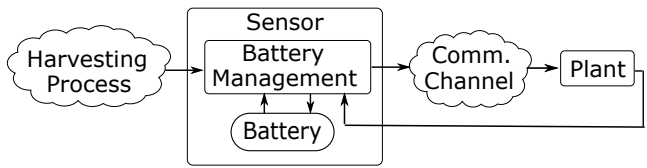}
		\caption{\small Energy is supplied to an energy harvesting sensor via an external energy harvesting process which is then either immediately used for providing a feedback signal to a plant via a wireless channel or stored for later use in a finite-capacity battery.
		}
		\label{fig:arch}
	\end{figure}
	
	\subsection{Plant Dynamics}
	\label{subsec:plant}
	We consider plants modeled as a switched linear system
	\begin{equation}
	\label{eq:sys}
	\begin{aligned}
	x(t+1) = 
	\begin{cases}
	A_c x(t) + \noise(t), & \gamma(t) = 1;\\
	A_o x(t) + \noise(t), & \gamma(t) = 0;\\
	\end{cases}
	\end{aligned}
	\end{equation}
	where the random variable $\gamma(t)$ indicates whether or not the plant has received a feedback signal at time $t,$ $A_c$ is a real $(n \times n)$-dimensional matrix which describes the nominal evolution of the state $x(t)$ when the system operates in closed loop (i.e. has successfully received a feedback signal), $A_o$ is an $(n \times n)$-dimensional real matrix which gives the nominal evolution of the state variable $x(t)$ in open loop (i.e. when the system has not received a feedback signal), and $\{\noise(t)\}$ is a sequence of independent, identically distributed (i.i.d.) random variables from a mean-zero distribution with finite, positive definite second moment matrix $W.$  Intuitively, we can think of the system operating in closed loop as applying an \emph{a priori} designed simple linear feedback to the plant.
	
	Note that in the case that $A_o$ is stable, the trivial transmission policy of \emph{never} transmitting feedback stabilizes the evolution of this system.  As such, we expect that in most interesting instances of this problem, $A_o$ will be unstable, though our analysis does not explicitly require this to be the case.  Indeed, it may well be the case that when $A_o$ and $A_c$ are both stable, a switching policy can still be used in order to optimize system performance, in which case our certification method will be useful.  Indeed, it is well known that it is possible to switch between two stable linear systems in such a way so as to induce instability (see, e.g., \cite[Example 3.17]{Costa2005}), and so closed-loop stability must be certified explicitly.
	
	Note also that while the disturbance process $\{\noise(t)\}$ is considered to be i.i.d. and mean-zero, results similar to those which we demonstrate here hold in the case where $\{\noise(t)\}$ is neither i.i.d. nor mean-zero, but consideration of such cases significantly complicates the underlying analysis, and is thus left for formal discussion in future work.  The assumption that $\{\noise(t)\}$ has square integrable increments is essential, as without such an assumption, the second moment of the plant state process $\{x(t)\}$ becomes undefined after only one time step.  We do not expect these to be severe limitations in practice, as many common disturbance models (e.g. i.i.d. Gaussian) satisfy these assumptions.
	
	\subsection{Communication Channel}
	\label{subsec:channel}
	In order to model channel imperfections and the decision process involved in determining when to transmit a signal, we model the distribution of $\gamma(t)$ as itself being a function of the amount of energy committed by the sensor to transmitting the feedback signal at time $t.$  We interact with the behavior of $\{\gamma(t)\}$ by selecting the sensor transmission energy $\Energy(t)$ at each time $t$, where the selection may in general be stochastic, in which case we design its distribution. The probability that the plant successfully receives the communication conditioned on a particular transmission energy $\energy$  is given by 
	\begin{equation}
	\Pr(\gamma(t) = 1 \, | \, \Energy(t) = \energy) =
	\begin{cases}
	\lambda, & \energy \geq \energythresh;\\
	0, & \texttt{otherwise};
	\end{cases} 
	\end{equation}
	where $\energythresh$ is an energy threshold above which the transmission is successful with probability $\lambda,$ and below which all transmissions are unsuccessful.  This model well approximates stochastic channels, such as the sigmoid models often considered in practice~\cite{Gatsis2014, Zhang2008, Ploplys2004}.
	
	\subsection{Harvesting Processes}
	\label{subsec:harvesting}
	We assume the energy harvesting process, i.e. the process which details how much energy is received by the sensor from the environment at each time, takes values in some finite set of integers $\HarvestSpace \triangleq [\Hmax]_0,$ and is a known, deterministic function of an observed discrete-time, discrete-space Markov process which is independent of the other stochastic processes in the system model.  Formally, we can decompose $\{H(t)\}$ into three fundamental components: a discrete, finite set $\mathcal{L}$ of latent process states, an $(|\mathcal{L}| \times |\mathcal{L}|)$-dimensional probability transition matrix $\Ltrans,$ and a deterministic function $h: \mathcal{L} \mapsto \HarvestSpace$ which maps an element $\ell$ from the latent space of $\{H(t)\}$ to the range space of $\{H(t)\}.$  Since we may take $\mathcal{L} = \{1, 2,\dots,|\mathcal{L}|\}$ without loss of generality, we characterize energy harvesting processes $\{H(t)\}$ in the remainder of the paper by specifying the pair $(h,\Ltrans).$ 
	
	\begin{figure}[!t]
		\centering \large
		\includegraphics[width=0.5\textwidth]{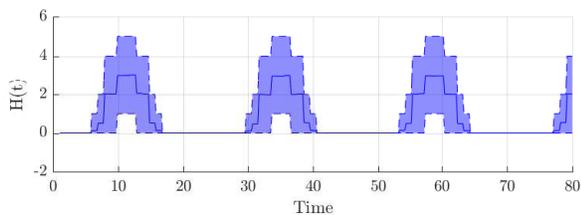}
		\caption{A plot of a $10 \text{k}$ sample Monte Carlo simulation of the harvesting process of solar intensity studied in Section \ref{subsec:periodic}.  The $80\%$ confidence interval is given in blue shade.} \label{fig:solar_source}
	\end{figure}
	
	We make no explicit assumptions about the ergodicity or time-invariance of $\{H(t)\},$ and as such our model incorporates as special cases models which range from simple (e.g., with each $H(t)$ taking the value of some fixed constant $h \in \HarvestSpace$) to complicated (e.g., a function of a periodic Markov process).  This level of abstraction allows us to incorporate models for a wide variety of sources into the same framework.  For instance, we can think of systems subjected to a regular charging cycle as being modeled by a harvesting process which is essentially deterministic (such as the inductive charging used for \emph{in vivo} biomedical sensors \cite{Zhang2009}), as well as systems subjected to highly stochastic, time-varying charging (such as the evolution of solar intensity, as in Figure \ref{fig:solar_source}), with each having an energy source well-modeled by a function of a Markov process.  We discuss this matter in more depth in Section \ref{sec:examples}.
	
	\subsection{Battery Dynamics}
	\label{subsec:battery}
	We assume the battery storage process $\{B(t)\}$ to be bounded above by a finite battery capacity constant $\Bcap,$ which is taken to be a given constant in this paper, but can be designed efficiently if the underlying plant model is sufficiently simple (see \cite{Watkins2017b} for further discussion on this matter).  Any energy available at time $t$ that is not used to transmit a feedback signal and is above the battery capacity is lost due to overflow.  This model guarantees that each element $B(t)$ is in the set $[\Bcap]_0,$ and that the process $\{B(t)\}$ obeys the nonlinear, stochastic dynamics
	\begin{equation}
	\label{eq:battery_propagation}
	B(t+1) = \lproj B(t) + H(t) - \Energy(t) \rproj_{0}^{\Bcap},
	\end{equation}
	where $\Energy(t)$ is the selected sensor transmission energy (see Section~\ref{subsec:channel}), and $H(t)$ is the amount of energy harvested at time $t$ (see Section~\ref{subsec:harvesting}).
	This battery evolution model is common in energy harvesting literature (see, e.g., \cite{Nourian2014b}).  To ensure the sensor always only uses energy available to it at each time, we assume that the energy usage process $\{\Energy(t)\}$ is subject to the energy availability constraint
	\begin{equation}
	\label{ineq:energy_causality}
	\Energy(t) \leq B(t) + H(t),
	\end{equation}
	which guarantees that the system never uses energy in excess of the current stored energy, and the amount of energy which has been harvested in the current time increment.  Note that this constraint implies directly that $\Energy(t)$ takes values on the set $[\Hmax + \Bcap]_0$ at all times.
	
	\subsection{Transmission History}
	\label{subsec:transmit_hist}
	
	We study sensors which have an indicator in memory, storing whether or not the sensor has attempted to transmit feedback in the previous time slot.  We denote this value at time $t$ by $F(t),$ which evolves as
	\begin{equation}
	\label{eq:hist_dyn}
	F(t+1) = 
	\begin{cases}
	1, & \Energy(t) \geq \energythresh;\\
	0, & \text{otherwise}.
	\end{cases}
	\end{equation}
	In Section \ref{sec:principles_policy}, we show that $F(t)$ is not necessary to design stabilizing transmission policies for systems with scalar plant dynamics, but that it helps significantly for designing policies for system with higher-dimensional plants.
	
	\subsection{Transmission Policies}
	\label{subsec:transmission}
	In this paper, the evolution of the plant is controlled by a given \emph{transmission policy}, i.e. a stochastic decision rule that the sensor uses to determine when and how to dedicate energy to providing feedback to the plant.  We can think of transmission policies as conditional probability distributions, and we restrict them to be conditioned only on the current battery level, latent harvesting state, and transmission history value.  Notationally, we have that our policy $u$ is defined as 
	\begin{equation}
	\label{eq:control_def}
	\begin{aligned}
	&u(\energy|B(t),L(t),F(t)) \\
	&\hspace{60 pt} \triangleq \Pr(\Energy(t) = \energy \, | \, B(t), L(t), F(t)).
	\end{aligned}
	\end{equation}
	Note that our assumption that the implemented transmission policy is conditioned as such is somewhat restrictive.  Indeed, such an assumption in effect restricts our consideration to the case in which transmission policies are \emph{memoryless}, as $(B(t),L(t),F(t))$ can be thought of as information which summarizes the current state of the sensor/energy harvesting process pair.  However, we show in Section \ref{subsec:scalar_policy} that insofar as stabilizability is concerned, this assumption is not restrictive for systems with scalar plant dynamics, as a simple greedy memoryless policy suffices to stabilize all such plants.  Moreover, we show in Section \ref{subsec:heuristic_dwell} that such memoryless policies are general enough to be able to encode predictive dwell time policies, which have features which help to mitigate the complexities associated to controlling systems with general linear dynamics.  Moreover, it is an assumption which plays a critical role in our analysis, and it should not be expected that a substantially more complex case can be handled in full generality, for reasons discussed in more detail in Section \ref{sec:stab_cert}.
	
	\subsection{Problem Statement}
	\label{subsec:problem}
	The main objective of this work is to develop an efficient stability test for the plant state process $\{x_u(t)\},$ where $u$ is a fixed, memoryless transmission policy.  As such, we must formally define a notion of stability for use in this paper.
	\begin{defn}[Mean-Square Exponential Ultimately Bounded]
		\label{def:exponential_mean_square_stability}
		{\rm
			The process $\{x_u(t)\}$ is mean-square exponentially ultimately bounded if and only if there exists some finite constants $\alpha \geq 1,$ $\xi \in (0,1),$ and $M \geq 0$ such that 
			\begin{equation}
			\E\|x_u(t)\|_2^2 \leq \alpha \xi^t \E \|x_u(0)\|^2_2 + M \Tr(W),
			\end{equation}
			holds for all $t \in \Znonnegative,$ and any square integrable $x_u(0).$} \oprocend
	\end{defn}
	Note that we use this definition of stability in order to emphasize the role that disturbances play in the evolution of the system.  In the case where the disturbances of significant, the trace of $W$ will be significant as well, and the system's ultimate bound may be large.  However, when the disturbances tend towards zero, so too does the system's ultimate bound.  Moreover, it will be shown in Section \ref{sec:stab_cert} that the system under consideration may be modeled as a MJLS, for which it is known that mean-square exponential stability and mean-square asymptotic stability are equivalent \cite[Theorem 3.9]{Costa2005}.  As such, the main results of our text are largely invariant to the notion of stability considered, and we will simply refer to $\{x_u(t)\}$ as stable whenever it satisfies Definition \ref{def:exponential_mean_square_stability}.
	
	To make our technical statements concise, we often make reference to an \emph{energy harvesting control system}, defined as:
	\begin{defn}[Energy Harvesting Control System]
		{\rm
			An energy harvesting control system (EHCS) is formally defined as the $7$-tuple $(A_c, A_o, h,\Ltrans, \lambda, \energythresh, \Bcap),$ which encodes the closed-loop dynamics, open-loop dynamics, energy harvesting process, packet reception probability, transmission energy, and battery capacity of the system, respectively.}\oprocend
	\end{defn} 
	\noindent From the preceding discussion, we see that the object $(A_c, A_o, h,\Ltrans, \lambda, \energythresh, \Bcap)$ contains all parameters of the model proposed, except for the disturbance process.  This is due to the fact that under our assumptions on $\{\noise(t)\},$ the stability of a energy harvesting control system is unaffected by the presence of stochastic disturbances, which we show formally in Proposition \ref{prop:equiv} (see Section \ref{sec:stab_cert}). 
	
	\emph{The problem we consider in this paper} is developing a tractable means for verifying the closed-loop stability of the plant state process $\{x_u(t)\}$ of an EHCS $(A_c, A_o, h, \Ltrans, \lambda, \energythresh, \Bcap),$ under a fixed transmission policy $u,$ which is defined in the sense of Section \ref{subsec:transmission}.  This problem is important insofar is that it allows a system designer to formally certify that under a fixed transmission policy, the evolution of a given plant will remain safe.  We present the solution to this problem in Section \ref{sec:stab_cert}, in which we show that the evolution of an EHCS can be embedded into a MJLS, and adapt a relevant stability result from the MJLS literature to our setting.  We demonstrate the problem considered admits transmission policies and energy harvesting process models which are sufficiently general for useful applications with detailed discussion in Section \ref{sec:principles_policy} and Section \ref{sec:examples}, respectively.
	
	\section{Stability Certification for Energy Harvesting Control Systems}
	\label{sec:stab_cert}
	
	We now develop an efficient test for determining the stability of an EHCS under a given transmission policy $u,$ where $u$ satisfies the definition given in Section \ref{subsec:transmission}.  We develop the test by embedding the evolution of the plant state process into the dynamics of a MJLS, and then adapt a stability result from the MJLS literature to show that the standard MJLS stability test can be used in our setting.  The essence of the embedding we develop is captured by noting that for a fixed policy $u,$ the process $\{S(t) \triangleq (B(t), L(t), \gamma(t), F(t))\}$ is Markovian, and contains everything necessary to model the dynamics of $\{x_{u}(t)\}$ as a MJLS.
	
	The process $\{S(t)\}$ evolves on the state space $\mathcal{S} \triangleq [\Bcap]_0 \times \mathcal{L} \times \{0,1\} \times \{0,1\}.$  We demonstrate that $\{S(t)\}$ is Markovian by verifying that the Markov property holds, i.e. that $S(t+1)$ is independent of $S(t-1),$ when conditioned on $S(t).$  To do so, we note that transition probabilities $$\Pr_{u}(S(t+1) = s^\prime | S(t) = s) \triangleq \MJLSmat{u}$$ are constants determined by the EHCS's specification.  To make this point more concrete, define $b,$ $\ell,$ $\gamma,$ $\energy,$ and $f$ to be the battery level, latent harvesting process state, loop closure state, energy usage, and transmission history value of the system at state $s;$ define $b^\prime,$ $\ell^\prime,$ $\gamma^\prime,$ $\energy^\prime,$ and $f^\prime$ likewise for $s^\prime.$  We now show that the probability of transitioning to some state $s^\prime$ from a particular state $s$ depends on the probabilities in the EHCS specification, and whether the battery and history dynamics are respected.
	
	Since we have assumed $\{L(t)\}$ to be Markovian and independent of the packet drop process, one may check for all transitions from a state $s$ to a state $s^\prime$ that occur with positive probability, we have transition probabilities given by
	\begin{equation}
	\label{eq:psi_def1}
	\begin{aligned}
	\MJLSmat{u} \triangleq
	\begin{cases}
	\lambda u_{s^\prime}^{\{\energy^\prime \geq \energythresh\}} u_{s}^{\energy}  \Ltrans(\ell^\prime,\ell) ,& \gamma^\prime = 1;\\
	((1 - \lambda) u_{s^\prime}^{\{\energy^\prime \geq \energythresh\}} + u_{s^\prime}^{\{\energy^{\prime} < \energythresh \}}) u_{s}^{\energy} \Ltrans(\ell^\prime,\ell), & \gamma^\prime = 0;
	\end{cases}
	\end{aligned}
	\end{equation}
	where $\Ltrans(\ell^\prime,\ell)$ denotes the probability of transition from latent harvesting state $\ell$ to state $\ell^\prime$ (see Section~\ref{subsec:harvesting}), and $u_{s}^{\{\energy \geq \energythresh\}}$ is notational shorthand for the probability that the amount of energy used at state $s$ is greater than or equal to $\energythresh.$  Intuitively, the right hand side of \eqref{eq:psi_def1} partitions the set of possible process transitions $\mathcal{S} \times \mathcal{S}$ with non-zero transition probabilities into events corresponding to the evolution of the states of the battery and loop closure processes.  For all transitions which occur with zero probability due to not adhering to the battery dynamics \eqref{eq:battery_propagation} or the history dynamics \eqref{eq:hist_dyn}, we have $\MJLSmat{u} \triangleq 0.$  Note that $\MJLSmat{u}$ is uniquely defined: for each state $s,$ the transmission policy specifies exactly one probability distribution for $\energy,$ which may then be used to evaluate \eqref{eq:psi_def1} to a particular constant.  As such, it fully specifies the transition probabilities of $\{S(t)\},$ demonstrating that $\{S(t)\}$ is a Markov chain, as claimed.  We now embed the evolution of $\{x_u(t)\}$ in a MJLS using this fact.
	
	By noting that the value of $\gamma(t)$ is embedded in $S(t),$ we may define $\gamma(S(t))$ to be the state of the loop closure variable at $S(t),$ and write the plant state dynamics \eqref{eq:sys} as
	\begin{equation}
	\label{eq:MJLS_dyn}
	x_u(t+1) = A_s x_u(t) + \noise(t),
	\end{equation}
	where we have defined
	\begin{equation}
	A_s \triangleq
	\begin{cases}
	A_c & \gamma(s) = 1;\\
	A_o & \gamma(s) = 0;
	\end{cases}
	\end{equation}
	with $s = S(t).$  From this, it follows that for any fixed, memoryless transmission policy $u,$ the process $\{x_u(t)\}$ is a MJLS with dynamics \eqref{eq:MJLS_dyn}, mode process $\{S(t)\},$ and disturbance process $\{\noise(t)\}.$  We now state the stability test we have established in the following theorem, which shows that the stability of an EHCS under a memoryless transmission policy $u$ may be determined by solving a semidefinite program.
	\begin{theorem}[EHCS Mean-square Stability Test]
		\label{thm:linear_prog}
		Let $\mathcal{S}$ be the state space of the mode transition process $S(t) = \{B(t),\ell(t),\gamma(t),F(t)\}$ generated by an EHCS, and fix some positive constant $D > 0.$  The EHCS is stable under the transmission policy $u$ if and only if the optimal value of the semidefinite program
		\begin{equation}
		\label{prog:sdp}
		\begin{aligned}
		&\underset{v \in \real, \{R_s \in \sympossemidef\}_{s \in \mathcal{S}}}{\text{minimize}} &&v\\
		&&& A_{s}^T \sum_{s^\prime \in S} \MJLSmat{u} R_{s^\prime} A_s  - R_{s} \curlyleq I v, \, \forall s \in \mathcal{S},\\
		&&& v \geq -D;
		\end{aligned}
		\end{equation}
		is equal to $-D,$ where $\MJLSmat{u}$ is defined by \eqref{eq:psi_def1}.
	\end{theorem}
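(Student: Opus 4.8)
The plan is to leverage the embedding established just above the statement: since $\{x_u(t)\}$ has already been shown to be a MJLS with dynamics \eqref{eq:MJLS_dyn}, mode process $\{S(t)\}$, and mode transition probabilities $\MJLSmat{u}$ given by \eqref{eq:psi_def1}, I would reduce the stability question to a standard mean-square stability (MSS) problem for MJLS and then translate the known Lyapunov characterization of MSS into the stated semidefinite program. As a preliminary step I would invoke Proposition \ref{prop:equiv} to remove the disturbance from consideration: because mean-square exponential ultimate boundedness of $\{x_u(t)\}$ in the sense of Definition \ref{def:exponential_mean_square_stability} is equivalent to MSS of the associated disturbance-free MJLS, it suffices to certify the latter.

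The core ingredient is the classical coupled-Lyapunov characterization of MSS for MJLS \cite[Theorem 3.9]{Costa2005}: the disturbance-free system is mean-square stable if and only if there exist symmetric positive definite matrices $\{R_s\}_{s \in \mathcal{S}}$ satisfying the strict inequalities
\[
A_s^T \sum_{s^\prime \in \mathcal{S}} \MJLSmat{u} R_{s^\prime} A_s - R_s \curlyless 0, \quad \forall s \in \mathcal{S}.
\]
Here I would take care to match conventions: with $\MJLSmat{u}$ playing the role of the transition probability from mode $s$ to mode $s^\prime$, the left-hand side above is precisely the adjoint second-moment operator appearing in \cite{Costa2005}, so the quoted theorem applies with no modification. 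This reduces the theorem to showing that such a strict certificate exists if and only if the optimal value of \eqref{prog:sdp} equals $-D$.

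To close the equivalence I would argue through the sign of the optimal value. First, the trivial point $(v,\{R_s\}) = (0, \{0\})$ is feasible and the constraint $v \geq -D$ bounds the program below, so the optimum always lies in $[-D, 0]$. The decisive observation is homogeneity of the constraint in $\{R_s\}$: if $(\{R_s\}, v_0)$ is feasible with $v_0 < 0$, then scaling every $R_s$ by $c = -D/v_0 > 0$ preserves positive semidefiniteness and yields a feasible point attaining $v = -D$; hence the optimum is strictly negative if and only if it equals $-D$. It therefore remains to show that a feasible point with $v < 0$ exists if and only if the MJLS is MSS. In one direction, a strict Lyapunov certificate as above satisfies $A_s^T \sum_{s^\prime} \MJLSmat{u} R_{s^\prime} A_s - R_s \curlyleq -\epsilon I$ for some uniform $\epsilon > 0$ (the state space $\mathcal{S}$ being finite), so $(\{R_s\}, \max\{-\epsilon, -D\})$ is feasible with negative $v$. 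Conversely, any feasible point with $v < 0$ makes the constraint read $A_s^T \sum_{s^\prime} \MJLSmat{u} R_{s^\prime} A_s - R_s \curlyleq v I \curlyless 0$; since the matrix $A_s^T \sum_{s^\prime} \MJLSmat{u} R_{s^\prime} A_s$ is positive semidefinite, this forces each $R_s \curlymore 0$, whence \cite[Theorem 3.9]{Costa2005} certifies MSS.

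The step I expect to require the most care is the disturbance reduction together with the bookkeeping of operator conventions. Proposition \ref{prop:equiv} must genuinely license replacing Definition \ref{def:exponential_mean_square_stability} by disturbance-free MSS, and I must verify that the direction of $\MJLSmat{u}$ and the placement of $A_s$ inside the summation coincide with the adjoint operator of \cite{Costa2005}, since an index transposition would couple the wrong modes and invalidate the citation. Once the strict inequality is seen to force $R_s \curlymore 0$, the remaining scaling and positive-definiteness arguments are routine.
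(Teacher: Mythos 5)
Your proposal is correct and follows essentially the same route as the paper's proof: reduce to mean-square stability of the unperturbed MJLS via Proposition \ref{prop:equiv}, invoke the coupled-Lyapunov LMI characterization from \cite{Costa2005}, and exploit homogeneity of the constraints in $\{R_s\}$ to show the optimum saturates at $-D$ exactly when a strict certificate exists. If anything, your treatment is slightly more careful than the paper's, since you explicitly verify trivial feasibility (so the optimum lies in $[-D,0]$) and note that any feasible point with $v<0$ forces $R_s \curlymore 0$, a point the paper's appendix glosses over when passing between the program's positive-semidefinite variables and the positive-definite matrices required by the cited MJLS result.
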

	\begin{proof}
		See Appendix \ref{app:thm:linear_prog}.
	\end{proof}
	Theorem \ref{thm:linear_prog} provides a simple, efficient test for assessing the stability of an EHCS $(A_c, A_o, h, \Ltrans, \lambda, \energythresh, \Bcap)$ under a fixed transmission policy $u.$  Note that the role of $D$ in \eqref{prog:sdp} is simply in bounding the value of $v$ below, which is only important insofar that it theoretically guarantees that standard semidefinite programming algorithms will terminate with a solution in time which is polynomial with respect to the dimensionality of the plant, and the cardinality of $\mathcal{S}$ (see, e.g., \cite{Vandenberghe1996} for a more detailed discussion on the complexity of semidefinite programming).
	
	Intuitively speaking, this method functions by computing a mode-dependent quadratic Lyapunov function $\Lyap(x,s) = x^T R_s x,$ which serves to certify the stability of the system.  The system of linear matrix inequalities in the constraints of the program ensure that the value of the Lyapunov function evaluated on the plant state process of the EHCS is a strict supermartingale, i.e. it enforces that
	\begin{equation*}
	\E_u [x^T(t+1)R_{s(t+1)}x(t+1)| x(t), s(t) ] < x^T(t)R_{s(t)}x(t)
	\end{equation*}
	holds for all times $t.$  That this inequality implies exponential mean-square stability for unperturbed Markov jump linear systems is known (see, e.g. \cite{Costa2005}), however the embedding used to take the EHCS model to a MJLS we have constructed is novel.  However, the particular notion of stability considered here is not covered by standard MJLS stability results, due to the lack of ergodicity of $\{S(t)\}.$  Hence, we must demonstrate that testing for stability of the unperturbed MJLS is equivalent to testing for stability of the perturbed MJLS.  As we need this equivalence result again later in the text (see Section \ref{sec:principles_policy}), we state it here in the following proposition.
	
	\begin{prop}[Equivalance of EMSUB and EMSS]
		\label{prop:equiv}
		Consider the dynamical system
		\begin{equation}
		\label{eq:z_dyn}
		z(t+1) =
		\begin{cases}
		A_c z(t), \gamma(t) = 1;\\
		A_o z(t), \gamma(t) = 0;
		\end{cases}
		\end{equation}
		where the random variable $\gamma(t)$ indicates whether or not the plant has received a feedback signal at time $t,$ as in the description of \eqref{eq:sys}.  Let $\{z_u(t)\}$ be the stochastic process generated by applying the transmission policy $u$ to \eqref{eq:z_dyn}, and define $\{x_u(t)\}$ similarly.  The process $\{x_u(t)\}$ is exponentially mean-square ultimately bounded if and only if the process $\{z_u(t)\}$ is exponentially mean-square stable.
		
		Moreover, if $\{z_u(t)\}$ is exponentially mean-square stable with constants $\alpha_u,$ and $\xi_u,$ i.e.
		\begin{equation}
		\E[z_u^T(t) z_u(t)] \leq \alpha_u\E[z_u^T(0) z_u(0)] \xi_u^t,
		\end{equation}
		holds for all time $t \geq 0,$ it then follows that we may choose $M_u = \frac{\alpha_u}{1 - \xi_u}$ to verity the exponential mean-square ultimate boundedness of $\{x_u(t)\}.$
	\end{prop}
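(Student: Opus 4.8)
The plan is to exploit the linearity of the shared dynamics together with the independence of the disturbance process from the mode process $\{S(t)\}$ in order to relate the second moments of $\{x_u(t)\}$ and $\{z_u(t)\}$ directly. Writing $\Phi(t,k) \triangleq A_{S(t-1)} A_{S(t-2)} \cdots A_{S(k)}$ for the (random) product of mode matrices, with the convention $\Phi(t,t) \triangleq I$, unrolling \eqref{eq:MJLS_dyn} gives $x_u(t) = \Phi(t,0) x_u(0) + \sum_{k=0}^{t-1} \Phi(t,k+1)\noise(k)$, while the homogeneous recursion \eqref{eq:z_dyn} gives $z_u(t) = \Phi(t,0) z_u(0)$. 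Initializing the two processes identically, $x_u(0) = z_u(0)$, and using that each $\noise(k)$ is mean-zero and independent of both $x_u(0)$ and the modes $\{S(j)\}_{j \geq 0}$ (so that every cross term vanishes in expectation), I would establish the key identity
\begin{equation*}
\E\|x_u(t)\|_2^2 = \E\|z_u(t)\|_2^2 + \sum_{k=0}^{t-1} \E\|\Phi(t,k+1)\noise(k)\|_2^2 .
\end{equation*}
Here the first term carries the initial condition, and the second term, which does not depend on $x_u(0),$ collects the injected disturbance energy.

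For the direction asserting that stability of $\{z_u(t)\}$ implies ultimate boundedness of $\{x_u(t)\},$ I would bound each summand by treating $\noise(k)$ as a fresh initial condition for the homogeneous system launched at time $k+1.$ Conditioning on $\noise(k)$ and using its independence from the modes active on $[k+1,t)$ together with $\E[\noise(k)\noise(k)^T] = W,$ the stability hypothesis yields $\E\|\Phi(t,k+1)\noise(k)\|_2^2 \leq \alpha_u \xi_u^{\,t-k-1}\Tr(W).$ Summing the resulting geometric series over $k$ and bounding it by its infinite-horizon value gives $\sum_{k=0}^{t-1}\E\|\Phi(t,k+1)\noise(k)\|_2^2 \leq \frac{\alpha_u}{1-\xi_u}\Tr(W),$ while the homogeneous term is bounded by $\alpha_u \xi_u^t \E\|x_u(0)\|_2^2$ directly from the hypothesis. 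Together these reproduce Definition \ref{def:exponential_mean_square_stability} with the advertised constant $M_u = \frac{\alpha_u}{1-\xi_u}.$

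For the converse, I would use a scaling argument that isolates the homogeneous contribution. Since the disturbance term in the identity above is independent of the initial state, replacing $x_u(0) = z_u(0)$ by $c\,z_u(0)$ for a scalar $c$ scales the homogeneous part quadratically — pathwise, $z_u(t)$ is linear in its initial state, so $\E\|z_u(t)\|_2^2$ scales by $c^2$ — and ultimate boundedness of the scaled disturbed process gives $c^2\,\E\|z_u(t)\|_2^2 \leq \alpha c^2 \xi^t \E\|z_u(0)\|_2^2 + M\Tr(W).$ Dividing by $c^2$ and letting $c \to \infty$ annihilates the disturbance term and leaves $\E\|z_u(t)\|_2^2 \leq \alpha \xi^t \E\|z_u(0)\|_2^2,$ which is exactly exponential mean-square stability of $\{z_u(t)\}.$

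The main obstacle I anticipate is the per-step bound $\E\|\Phi(t,k+1)\noise(k)\|_2^2 \leq \alpha_u \xi_u^{\,t-k-1}\Tr(W)$ used in the forward direction: applying the stability estimate to the system restarted at time $k+1$ requires that the constants $(\alpha_u,\xi_u)$ hold uniformly over the mode distribution reached at time $k+1,$ not merely for the nominal distribution at time $0.$ Because $\{S(t)\}$ is time-homogeneous and finite, this uniformity follows from the operator-theoretic characterization of mean-square stability for Markov jump linear systems, under which exponential decay of the coupled second-moment recursion holds with a single pair of constants over all initial state--mode configurations \cite[Ch.~3]{Costa2005}; invoking this characterization is precisely what allows the estimate to be applied at every restart time.
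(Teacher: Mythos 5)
Your proof is correct, and while your forward direction coincides with the paper's (same decomposition into a homogeneous term plus disturbance terms, same restart-and-geometric-series bound, same constant $M_u = \frac{\alpha_u}{1-\xi_u}$; you are in fact more explicit than the paper about why the EMSS constants may be applied uniformly at every restart time $k+1,$ a point the paper's proof silently assumes), your converse takes a genuinely different route. The paper argues the converse by deducing from boundedness of the accumulated disturbance sum that $\Tr\left(\Trans{k}{r} W\right) \to 0$ for modes $r$ in the positive recurrent set $\Recurrent,$ then uses positive definiteness of $W$ (via a square-root/trace argument) to conclude $\Trans{k}{r} \to 0,$ which yields only \emph{asymptotic} mean-square stability of $\{z_u(t)\}$; exponential stability is then recovered by invoking the equivalence of asymptotic and exponential MSS for unperturbed MJLS \cite[Theorem 3.9]{Costa2005}. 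Your scaling argument --- scale the initial condition by $c,$ use that the disturbance contribution is independent of the initial state and non-negative, divide by $c^2,$ and let $c \to \infty$ --- is more elementary and buys more: it delivers the exponential bound $\E\|z_u(t)\|_2^2 \leq \alpha \xi^t \E\|z_u(0)\|_2^2$ directly with the very constants appearing in the EMSUB hypothesis; it never uses positive definiteness of $W$ (the paper's converse breaks without it); and it requires no recurrence structure of the mode chain nor any external MJLS theory. Its only requirements are the uniformity of the EMSUB constants over square-integrable initial conditions --- which Definition \ref{def:exponential_mean_square_stability} does guarantee --- and the independence of $\{\noise(t)\}$ from the mode process and the initial state (needed for the cross terms in your key identity to vanish), which holds here because the transmission policy is never conditioned on the plant state.
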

	
	\begin{proof}
		See Appendix \ref{app:prop:equiv}. 
	\end{proof}
	
	In light of Proposition \ref{prop:equiv}, formal proof of Theorem \ref{thm:linear_prog} is straightforward.  In particular, we may adapt known results for the exponential mean-square stability of unperturbed MJLS to our framework, and verify that the solution of the particular semidefinite program given is indeed $-D$ when the system is stable.  For purposes of completeness, this argument is given in detail in Appendix \ref{app:thm:linear_prog}.
	
	\begin{remark}[Stability Certification with Fixed Lyapunov Function]
		\label{rem:co_des}
		{\rm
			All of the results of this section are written with the perspective that the transmission policy is fixed, and the Lyapunov function used to certify stability is to be computed.  This was done because there are systems for which one can design good heuristic transmission policies.  In fact, we show in Section \ref{sec:principles_policy} that for scalar systems, one only need to check the stability of a particular greedy transmission policy.  However, it is worth noting that this perspective is inessential to the fundamental theory.  
			
			Computation of a stability certificate is also tractable when the matrices which define the Lyapunov function $\Lyap(x,s) = x^T R_s x,$ i.e. $\{R_s\}_{s \in \mathcal{S}},$ are fixed, but the transmission policy $u$ is left to be determined.  To do so, one may use a similar program to that of \eqref{prog:sdp}, but in which the variable matrices $\{R_s\}_{s \in \mathcal{S}}$ are taken to be program data, and the constants used to define $u$ are made to be optimization variables, subject to the constraints that $u$ is transmission policy in the sense of Section \ref{subsec:transmission}.  Having \emph{both} the transmission policy and the Lyapunov function left as unknowns makes the constraints of the optimization problem a set of bilinear matrix inequalities which are nonconvex, and in general difficult to solve.
			\oprocend}
	\end{remark}
	
	\begin{remark}[Stability Certification for General Policies]
		{\rm
			As noted in Section \ref{subsec:transmission}, we have restricted our attention here to a particular subset of memoryless transmission policies.  While in principle the stability certification test given by Theorem \ref{thm:linear_prog} can be extended in a straightforward manner to any memoryless policy, we will see in Section \ref{sec:principles_policy} that the class of transmission policies studied is sufficiently general to stabilize many interesting systems.  However, this class of policies is technically restrictive in the sense that strictly more general policies can be defined in practice.  For example, one may wish to make the the transmission policy time-varying.
			
			We have not explicitly considered more general policies, because providing an efficient test for certifying the stability of the EHCS under general transmission policies is technically challenging.  In particular, recent results from the MJLS literature \cite{Lun2016} show that determining the mean-square stability of a MJLS under a time-varying mode transition process is $\NP$-hard.  As this is the problem which would be faced if we allowed for time-varying transmission policies, stability certification would be hard for such a problem.  \oprocend
		}
	\end{remark}
	
	\begin{remark}[Alternate Stability Tests]
		{\rm
			It is well known in the MJLS literature that one may check the mean-square stability of the system by determining if the spectral radius of a linear operator induced by the considered MJLS less than one.  This test is equivalent to that which was given by Theorem \ref{thm:linear_prog}, but may be more computationally efficient to check in some circumstances.  We have fully detailed the semi-definite programming stability test here because it allows for a more accessible construction, and highlights the importance of considering fixed policies (see Remark \ref{rem:co_des}).  A person interested in reading further about the spectral radius test can consult standard MJLS references, such as \cite[Chapter 3]{Costa2005}. \oprocend
		}
	\end{remark}
	
	\section{Transmission Policy Design}
	\label{sec:principles_policy}
	In this section, we develop some principles for designing good transmission policies for EHCS.  Since co-designing a transmission policy along with the Lyapunov function required to verify the system's closed-loop stability is a nonconvex optimization problem (see Remark \ref{rem:co_des}), being able to find good transmission policies is essential in designing controllers which certifiably stabilize the evolution of the system.  Formally, we decompose our results into two classifications: those for systems with scalar plants, and those for systems with nonscalar plants.
	
	For scalar systems, we see that one only ever need to check the stability of the system under a particular greedy transmission policy, and that this policy alone certifies the stabilizability of the system, i.e. whether or not a stabilizing causal transmission policy exists.  This is important, insofar that it allows system designers to chose components of scalar EHCS (e.g. the battery capacity $\Bcap$) so as to guarantee the existence of a stabilizing policy; we discuss this topic in greater detail in the preliminary work \cite{Watkins2017b}.  For general nonscalar systems, we show how one can use the concept of dwell time to identify stabilizing polices for systems which the greedy policy fails to stabilize.
	
	\subsection{A Stabilizing Policy for Scalar Systems}
	\label{subsec:scalar_policy}
	In this subsection, we show that a simple, greedy transmission policy is sufficient for stabilizing an EHCS with scalar plant dynamics.  The greedy transmission policy in question is described mathematically by the conditional probability distribution $u_g,$ defined as
	\begin{equation}
	\label{eq:greedy_policy}
	u_g(\energy|B(t),H(t)) \triangleq
	\begin{cases}
	1, & \energy = \energythresh, B(t) + H(t) \geq \energythresh;\\
	1, & \energy = 0, B(t) + H(t) < \energythresh;\\
	0, & \texttt{otherwise}.
	\end{cases}
	\end{equation}
	The transmission policy $u_g$ applies exactly $\energythresh$ units of energy to transmitting a feedback signal at precisely those moments at which the sensor has enough energy available to do so.  Despite its simplicity - note that it is a deterministic function of only the battery and harvesting states at each time, and \emph{not} the transmission history - we prove that it is the \emph{only} policy which needs to be investigated to establish the stabilizability of an EHCS.  That is if \emph{any} causal transmission policy exists which stabilizes a particular EHCS, the greedy transmission policy $u_g$ does so as well.  We formalize this as follows:
	
	\begin{theorem}[Existence of Stabilizing Policy for Scalar EHCS]
		\label{thm:existence_stab}
		Consider an EHCS $(A_c, A_o, h,\Ltrans, \lambda, \energythresh, \Bcap)$ with $n = 1.$  There exists a causal transmission policy
		\begin{equation*}
		\begin{aligned}
		u^*(\energy| \{(S(\tau),x(\tau))\}_{\tau = 0}^{t}) \triangleq \Pr(\Energy(t) = \energy | \{(S(\tau),x(\tau))\}_{\tau = 0}^{t})
		\end{aligned}
		\end{equation*}
		which stabilizes the $EHCS$ if and only if the process $\{x_{u_g}(t)\}$ is stable.  That is, a given EHCS is stabilizable if and only if it is stable under the greedy transmission policy.
	\end{theorem}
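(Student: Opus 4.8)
The plan is to prove the two implications separately, carrying the nontrivial direction by a pathwise domination argument establishing that the greedy policy minimizes the mean-square state among \emph{all} causal policies at every time. The forward direction is immediate: $u_g$ is itself a causal (indeed memoryless) transmission policy, so if $\{x_{u_g}(t)\}$ is stable then $u_g$ witnesses the existence of a stabilizing causal policy. For the reverse direction, I would first invoke Proposition~\ref{prop:equiv} to reduce the question to the undisturbed dynamics \eqref{eq:z_dyn}, so that it suffices to show that whenever some causal policy $u^\ast$ renders $\{z_{u^\ast}(t)\}$ exponentially mean-square stable, the greedy process $\{z_{u_g}(t)\}$ is stable as well. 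Exploiting the scalar structure $n=1$, every causal policy $\pi$ yields the multiplicative form $z_\pi(t)^2 = z_\pi(0)^2\, A_o^{2t} r^{N_c^\pi(t)}$, where $N_c^\pi(t)$ counts the loop closures before $t$ and $r := A_c^2/A_o^2$. I work in the regime of interest in which closing the loop is contractive relative to the open loop, $r \le 1$ (if $r>1$ closing the loop is never beneficial, and the identical argument applies with ``never transmit'' playing the role of $u_g$). Since $r^k$ is nonincreasing in $k$, it suffices to show that the greedy policy stochastically maximizes the closure count, which gives $\E[z_{u_g}(t)^2] \le \E[z_\pi(t)^2]$ for every $t$ and every causal $\pi$, so that stability transfers.

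The first ingredient is that the greedy policy maximizes the number of transmission attempts pathwise. Because committing energy in $(0,\energythresh)$ never yields a reception and only depletes the battery, and committing more than $\energythresh$ is wasteful, we may restrict attention to commitments in $\{0,\energythresh\}$; a short energy-accounting (exchange) argument then shows that for each harvesting realization the number of slots in which any causal policy commits at least $\energythresh$ by time $t$, call it $|U(t)|$, satisfies $|U(t)| \le |G(t)|$, where $G(t)$ is the attempt set of the greedy policy. Crucially, since $u_g$ conditions only on $(B(t),H(t))$ per \eqref{eq:greedy_policy} and never on the reception outcomes or the plant state, its battery trajectory and hence $G(t)$ are a \emph{deterministic} function of the harvesting path $\{H(\tau)\}$.

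The second ingredient converts this attempt-count domination into a mean-square domination. I would couple the reception events across policies by i.i.d.\ uniforms $\{V_\tau\}$, declaring an attempt at time $\tau$ successful if and only if $V_\tau \le \lambda$; this is independent of the filtration generated by the harvesting path, the disturbances, and the past receptions, even though the competitor's attempt indicator $a_\tau$ may be adapted to all of these. Conditioning on the harvesting path $\mathbf H$ and writing $K(t) = \sum_{\tau<t} a_\tau$ and $q := \lambda r + (1-\lambda) \le 1$, the process $Z_t := r^{N_c^\pi(t)} q^{-K(t)}$ is a unit-mean martingale; combined with $K(t) \le |G(t)|$ and $q \le 1$ this yields $\E[r^{N_c^\pi(t)} \mid \mathbf H] \ge q^{|G(t)|} = \E[r^{N_c^{u_g}(t)} \mid \mathbf H]$, the last equality because under $u_g$ the $|G(t)|$ attempts succeed independently with probability $\lambda$. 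Taking expectations over $\mathbf H$ and reinstating the factor $A_o^{2t}$ gives $\E[z_{u_g}(t)^2] \le \E[z_\pi(t)^2]$, and mapping back through Proposition~\ref{prop:equiv} completes the reverse direction.

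I expect the main obstacle to be the pathwise attempt-maximality of the greedy policy in the first ingredient. Because the competing policy $u^\ast$ may schedule transmissions adaptively as a function of the entire observed history --- including the plant state, and hence the disturbances and past receptions --- the comparison cannot be made by a naive coupling of trajectories. The argument is salvaged precisely by the observation that the greedy schedule is exogenous, a function of $\{H(\tau)\}$ alone, while energy conservation caps the attempts of \emph{any} causal policy by the greedy count for each fixed harvesting realization; formalizing this cap as a clean exchange/accounting lemma, and verifying that the coupling keeps $V_\tau$ independent of the adapted decision $a_\tau$, is the crux of the proof.
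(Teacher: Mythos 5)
Your proposal is correct in substance and shares the paper's skeleton---reduce to the undisturbed process via Proposition~\ref{prop:equiv}, then show the greedy policy dominates every causal policy in mean square by first dominating \emph{attempt} counts---but the step that converts attempt-count domination into mean-square domination takes a genuinely different route. The paper constructs a sample space in which the channel Bernoulli outcomes are indexed by \emph{attempt number} rather than by time: $g_\omega(\ell)$ decides the fate of the $\ell$'th attempt, whichever policy makes it and whenever it occurs. Under that coupling, a policy with more attempts by time $t$ sums a longer prefix of the same nonnegative $0/1$ sequence, so success counts are dominated \emph{pathwise} (Lemma~\ref{lem:pathwise_dominance}), scalar monotonicity gives pathwise domination of the squared state (Corollary~\ref{cor:z_dom}), and the mean-square inequality follows by integration. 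You instead keep the natural time-indexed coupling ($V_\tau\le\lambda$), under which pathwise success domination is simply false---an adaptive competitor can be lucky at its chosen slots---and you repair this with the multiplicative martingale $Z_t=r^{N_c^\pi(t)}q^{-K(t)}$, obtaining domination of $\E[r^{N_c^\pi(t)}\mid \mathbf{H}]$ in conditional expectation, together with the (correct and essential) observation that the greedy schedule is deterministic given the harvesting path, so its conditional mean is exactly $q^{|G(t)|}$; this argument is sound, including the independence of $V_t$ from the adapted decision $a_t$. Both routes rest on the identical combinatorial core, which you rightly isolate as the crux: greedy maximizes attempts pathwise by energy accounting. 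The paper proves this by a joint induction on the attempt-count inequality and the battery inequality $B_u(t+1)\le B_{u_g}(t+1)+\energythresh\bigl(N^{\{A\}}_{u_g}(t)-N^{\{A\}}_u(t)\bigr)$; your ``exchange/accounting lemma'' would have to be exactly this induction, so it should be written out rather than asserted. What your route buys: it avoids the paper's somewhat delicate two-index sample-space construction, and it makes explicit the standing assumption $r=A_c^2/A_o^2\le 1$ (equivalently $q\le1$), which the paper uses silently when Corollary~\ref{cor:z_dom} asserts that $z_u(t)$ is monotone decreasing in the closure count. One caveat: in the excluded regime $r>1$ your fallback (``never transmit'' in place of $u_g$) proves a different statement than the theorem, since greedy need not be a valid certificate there; but this is a gap the paper shares, not one your argument introduces.
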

	
	We detail next the essential features of the argument supporting Theorem \ref{thm:existence_stab}.  Interestingly, most of the weight of the proof can be shifted onto proving a pathwise stochastic dominance inequality between the greedy policy and any other causal policy.  To demonstrate this, we need to formally define a sample space for the process.  For the remainder of the paper, we define the sample space as 
	\begin{equation*}
	\begin{aligned}
	&\Omega \triangleq \{(f_\omega,g_\omega) \,| \, f_{\omega}:\Znonnegative \mapsto \mathcal{L} \times [0,1], g_{\omega}: \Znonnegative \mapsto \{0,1\}\},
	\end{aligned}
	\end{equation*} 
	where $f_\omega(t)$ is a vector function containing the evolution of $\{L(t)\}$ in its first component $f_{1 \omega}(t)$ and numbers for the randomization required to determine particular actions from a stochastic transmission policy $u$ in its second component $f_{2 \omega}(t),$ and $g_\omega(\ell)$ is a function indicating whether or not the $\ell'$th feedback attempt reaches the plant successfully.  Note that we have defined the sample space to consist of \emph{pairs} of functions so as to be able to index time $t$ and the number of communication attempts $\ell$ made by the system separately; this technical detail is important.  
	
	Intuitively, the function $f_w$ contains all of the randomization needed to model the processes which are indexed naturally with respect to time: from it, we may fully determine the evolution of the harvesting process $\{H(t)\},$  as described in Section~\ref{subsec:harvesting}, and the randomization required to implement a stochastic transmission policy as described in Section~\ref{subsec:transmission}.  The function $g_\omega$ contains the randomization needed to model the communication channel  according to Section~\ref{subsec:channel}: from it, we can determined whether the $k$'th time the transmission energy process exceeds $\energythresh$ - that is, the $k$'th communication \emph{attempt} - results in a successful loop closure.  
	
	The only subtle point required in verifying that $\Omega$ is a proper sample space for the process is in confirming that all process variables are fully determined by the selection of a particular sample path $\omega.$  Since all of the process variables at a particular time $t$ can either be determined directly from $\omega$ or $\Energy(t),$ we briefly discuss how one may compute the channel energy at every time from a selected $\omega.$  After observing any sequence of events through time $t,$ and under any fixed policy $u,$ we may partition $[0,1]$ into a collection of disjoint intervals $\{\Interval_{(u,t)}(\energy)\}$ such that the Lebesgue measure of $\Interval_{(u,t)}(\energy)$ is equal to the probability that $\Energy(t) = \energy$.  By associating to $f_{2 \omega}(t)$ the probability measure of a sequence of i.i.d. uniform random variables on $[0,1],$ we may take $\Energy(t) = \energy$ for whichever $\energy$ satisfies $f_{2 \omega}(t) \in \Interval_{(u,t)}(\energy)$ and have that $\Energy(t)$ follows the correct distribution. 
	
	Note that - unlike in many types of analysis one may perform on models with stochastic control policies - the sample space and probability measure are both unaffected by the particular choice of transmission policy $u.$  This is accomplished by taking the probability measure $\Pr$ on the sample space $\Omega$ to be the product of the probability measures of three independent processes.  In particular, we have $\Pr = \Pr_{f_1} \Pr_{f_2} \Pr_{g},$ where $\Pr_{f_1}$ is the measure induced by the latent-state process of $\{H(t)\},$ $\Pr_{f_2}$ is the measure of a sequence of i.i.d. uniform random variables on the unit interval, and $\Pr_g$ is the measure of a sequence of i.i.d. Bernoulli random variables with success probability $\lambda.$  This ability to decouple the choice of strategy from the choice of probability measure is important in that it allows us to compare the performance of different control policies on a sample-by-sample basis.  More precisely, if we let $N_{u}(t; \omega)$ be the number of successful loop closures attained by a transmission policy $u$ through time $t$ on sample path $\omega,$ we can show the following:  
	
	\begin{lem}[Pathwise Dominance of Greedy Policy]
		\label{lem:pathwise_dominance}
		For all times $t \in \Znonnegative,$ and all samples $\omega \in \Omega,$ it holds that
		\begin{equation}
		N_{u} (t; \omega) \leq N_{u_g} (t; \omega),
		\end{equation}
		i.e. the greedy transmission policy dominates every other causal transmission policy in terms of successful loop closures at every time along every sample path.
	\end{lem}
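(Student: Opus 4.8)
The plan is to reduce the claim to a statement purely about the number of transmission \emph{attempts}, and then to establish that statement by a joint induction on time along the fixed path $\omega$. Recall from the construction of $\Omega$ that the outcome of the $\ell$-th communication attempt is governed by $g_\omega(\ell)$ independently of which policy issues that attempt. Writing $A_u(t;\omega)$ for the number of times the realized energy usage satisfies $\Energy(\tau) \geq \energythresh$ in the slots $\tau = 0,\dots,t-1$ (the number of \emph{attempts} before slot $t$), the successful loop closures are exactly $N_u(t;\omega) = \sum_{\ell=1}^{A_u(t+1;\omega)} g_\omega(\ell)$, with the \emph{same} summand sequence $\{g_\omega(\ell)\}$ for every policy. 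Since each $g_\omega(\ell) \in \{0,1\}$ is nonnegative, it suffices to prove the attempt-count domination $A_u(t;\omega) \leq A_{u_g}(t;\omega)$ for all $t \in \Znonnegative$; the Lemma then follows immediately.

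To prove attempt-count domination I would fix $\omega$ and compare the realized battery trajectories $B_u(t)$ and $B_{u_g}(t)$ (abbreviated $B_g$), both driven by the same harvested sequence $H(t)$. The central device is the potential $\Phi_\bullet(t) \triangleq B_\bullet(t) + \energythresh A_\bullet(t)$, aggregating stored energy with the energy already committed to attempts. I would prove by induction on $t$ the joint invariant
\[
A_g(t) \geq A_u(t) \qquad\text{and}\qquad \Phi_g(t) \geq \Phi_u(t),
\]
which holds at $t=0$ by the shared initial condition. For the step, set $\beta_\bullet = B_\bullet(t)+H(t)$. Energy causality $\Energy(t)\le\beta_\bullet$ together with the projection in \eqref{eq:battery_propagation} gives, for \emph{any} feasible action of $u$, the bound $\Phi_u(t+1) \leq \min\big(\Phi_u(t)+H(t),\, \Bcap + \energythresh + \energythresh A_g(t)\big)$; the first term uses that an attempt spends at least $\energythresh$ while an idle spends at least $0$, and the second uses $B_u(t+1)\le\Bcap$ together with $A_u(t+1)\le A_g(t)+1$. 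Because $u_g$ spends exactly $\energythresh$ whenever $\beta_g\ge\energythresh$ and nothing otherwise, its updates are tight: $\Phi_g(t+1)=\min\big(\Phi_g(t)+H(t),\,\Bcap+\energythresh+\energythresh A_g(t)\big)$ on an attempt and $\Phi_g(t+1)=\min\big(\Phi_g(t)+H(t),\,\Bcap+\energythresh A_g(t)\big)$ on an idle. When $u_g$ attempts, the hypothesis $\Phi_g(t)\ge\Phi_u(t)$ immediately propagates $\Phi_g(t+1)\ge\Phi_u(t+1)$; the greedy-idle case is coupled to the attempt count and is treated next.

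The greedy-idle configuration is where the only genuine subtlety lies, and I expect it to be the main obstacle, since there both invariants must be closed at once. The dangerous situation is $u_g$ being forced to idle (because $\beta_g<\energythresh$) while $u$ issues an attempt, which naively lets $u$ win a loop closure greedy cannot match. I would rule this out with the potential inequality: an attempt by $u$ forces $\beta_u\ge\energythresh>\beta_g$, hence $B_u(t)>B_g(t)$, and substituting into $\Phi_g(t)\ge\Phi_u(t)$ yields $\energythresh\,(A_g(t)-A_u(t))\ge B_u(t)-B_g(t)>0$, i.e. $A_g(t)\ge A_u(t)+1$; greedy is already strictly ahead, so $A_g(t+1)\ge A_u(t+1)$, and moreover $\Phi_u(t+1)\le \Bcap+\energythresh(A_u(t)+1)\le \Bcap+\energythresh A_g(t)$ gives $\Phi_g(t+1)\ge\Phi_u(t+1)$. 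In the complementary tie case $A_g(t)=A_u(t)$, the same inequality gives $B_g(t)\ge B_u(t)$, so $\beta_g\ge\beta_u$ and greedy attempts whenever $u$ can; the remaining sub-cases are routine. This closes the induction and, notably, absorbs the battery-cap term automatically, since overflow only ever helps the always-draining greedy policy.
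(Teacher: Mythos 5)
Your proof is correct and takes essentially the same route as the paper: the same reduction from successful to attempted loop closures via the attempt-indexed success sequence $g_\omega$, followed by the same joint induction, since your potential inequality $\Phi_g(t) \geq \Phi_u(t)$ with $\Phi_\bullet = B_\bullet + \energythresh A_\bullet$ is precisely the paper's invariant $B_u(t+1) \leq B_{u_g}(t+1) + \energythresh\bigl(N_{u_g}^{\{A\}}(t) - N_u^{\{A\}}(t)\bigr)$ rearranged. The only cosmetic difference is that you close the inductive step with explicit min-formulas and a greedy-attempt/greedy-idle case split, whereas the paper chains inequalities on the energy expended per slot and uses monotonicity of the battery projection $\lproj \cdot \rproj_0^{\Bcap}$.
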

	\begin{proof}
		See Appendix \ref{app:lem:pathwise_dominance}.
	\end{proof}
	
	A direct consequence of Proposition \ref{prop:equiv} is that proving Theorem \ref{thm:existence_stab} only explicitly requires analyzing the evolution of the undisturbed process $\{z_u(t)\}.$  Considering the consequences of Proposition \ref{lem:pathwise_dominance} in detail demonstrates that for scalar systems, the greedy transmission policy outperforms all others with respect to the state process $\{z_{u}(t)\},$ as stated next.
	
	\begin{cor}[Stochastic Dominance Inequalities]
		\label{cor:z_dom}
		For all times $t \in \Znonnegative,$ and all samples $\omega \in \Omega,$ it holds that
		\begin{equation}
		\label{ineq:z}
		\|z_{u_g}(t; \omega)\|_2^2 \leq \|z_{u}(t; \omega)\|_2^2,
		\end{equation}
		and hence it also holds that
		\begin{equation}
		\label{ineq:ms_dominance}
		\E \|z_{u_g}(t)\|_2^2 \leq \E \|z_{u}(t)\|_2^2,
		\end{equation}
		where the expectation is taken with respect to the probability measure $\Pr = \Pr_{f_{1}}\Pr_{f_{2}}\Pr_{g}.$
	\end{cor}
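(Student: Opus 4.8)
The plan is to exploit the scalar structure ($n=1$) to reduce the pathwise state magnitude to a function of a single integer---the number of successful loop closures---and then to invoke Lemma \ref{lem:pathwise_dominance} directly. First I would fix a sample path $\omega \in \Omega$, a time $t \in \Znonnegative$, and a common initial condition $z(0)$ shared by both processes; this is legitimate because, as established above, the sample space $\Omega$ and the measure $\Pr = \Pr_{f_1}\Pr_{f_2}\Pr_g$ do not depend on the transmission policy. Iterating the undisturbed dynamics \eqref{eq:z_dyn} gives $z_u(t;\omega) = A_{\gamma(t-1)}\cdots A_{\gamma(0)}\,z(0)$, where each factor equals $A_c$ when $\gamma(\tau)=1$ and $A_o$ when $\gamma(\tau)=0$. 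Since $A_c$ and $A_o$ are scalars they commute, so the product depends on $\omega$ only through the total count of closed-loop steps, which is exactly $N_u(t;\omega)$. Hence
\begin{equation*}
\|z_u(t;\omega)\|_2^2 = |A_c|^{2 N_u(t;\omega)}\,|A_o|^{2(t - N_u(t;\omega))}\,\|z(0)\|_2^2,
\end{equation*}
which I would rewrite as $|A_o|^{2t}\,(|A_c|^2/|A_o|^2)^{N_u(t;\omega)}\,\|z(0)\|_2^2$.

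The second step is a monotonicity argument. Under the hypothesis (natural for the stabilization problem, and the only regime in which the greedy policy is the right object to study) that closing the loop does not increase the scalar growth rate, i.e. $|A_c| \le |A_o|$, the ratio $|A_c|^2/|A_o|^2$ lies in $[0,1]$, so the expression above is nonincreasing in the exponent $N_u(t;\omega)$. Lemma \ref{lem:pathwise_dominance} asserts $N_u(t;\omega) \le N_{u_g}(t;\omega)$ for every $t$ and $\omega$, so replacing the exponent by the larger value $N_{u_g}(t;\omega)$ can only decrease the quantity, which yields \eqref{ineq:z} pathwise. I would flag that the hypothesis $|A_c|\le|A_o|$ is essential and is the one genuinely necessary structural input beyond Lemma \ref{lem:pathwise_dominance}: were feedback counterproductive ($|A_c|>|A_o|$), the greedy policy would \emph{maximize} rather than minimize the growth rate, and the never-transmit policy would furnish a counterexample to \eqref{ineq:z}. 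Identifying and invoking this sign condition is therefore the main (though conceptually small) obstacle.

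Finally, to obtain \eqref{ineq:ms_dominance} I would integrate the pointwise inequality \eqref{ineq:z} over $\Omega$ against $\Pr = \Pr_{f_1}\Pr_{f_2}\Pr_g$. Since both $\|z_{u_g}(t;\omega)\|_2^2$ and $\|z_u(t;\omega)\|_2^2$ are nonnegative measurable functions of $\omega$ and the ordering holds for \emph{every} $\omega$, monotonicity of the Lebesgue integral gives $\E\|z_{u_g}(t)\|_2^2 \le \E\|z_u(t)\|_2^2$. The crucial enabling fact, already recorded in the construction of $\Omega$, is that the measure is identical for the two policies, so both expectations are taken against the same reference measure and no change-of-measure correction appears. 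Beyond pinning down the sign condition $|A_c|\le|A_o|$, every remaining step---commutativity of the scalar factors, monotonicity in the closure count, and the passage to expectations---is routine.
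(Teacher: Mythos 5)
Your proof is correct and follows essentially the same route as the paper's: the paper dispatches this corollary in a single line, noting that $z_u(t)$ is a monotone decreasing function of $N_u(t)$ and then integrating, which is precisely your scalar-commutativity plus monotonicity-in-the-closure-count argument combined with Lemma \ref{lem:pathwise_dominance} and monotonicity of the integral. Your explicit flagging of the sign condition $|A_c| \leq |A_o|$ is a refinement rather than a deviation --- the paper's one-line monotonicity claim silently requires exactly this condition (it is what makes feedback non-counterproductive in the scalar case), so identifying it makes your argument more careful than the paper's, not different from it.
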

	
	Since Corollary \ref{cor:z_dom} is proven immediately by noting that $z_u(t)$ is a monotone decreasing function of $N_u(t)$ and integrating, formal proof is not given here.  As a direct consequence of \eqref{ineq:ms_dominance}, it also holds that $\{z_{u}(t)\}$ is exponentially mean-square stable only if $\{z_{u_g}(t)\}$ is exponentially mean-square stable.  Since stability of $\{z_{u_g}(t)\}$ implies stabilizability of the EHCS and Proposition \ref{prop:equiv} implies that $\{x_{u_g}(t)\}$ is stable if $\{z_{u_g}(t)\}$ is exponentially mean-square stable, Theorem \ref{thm:existence_stab} is proven as well.  
	
	Note that, as stated, the result in Theorem \ref{thm:existence_stab} is conservative.  Stability of the greedy policy suffices as a stabilizability test for a larger class of systems than those with scalar plants.  Showing that this is the case for simultaneously diagonalizable systems involves only basic algebraic arguments, and so its proof is left out of this manuscript.  One can also show that greedy policies suffice for stabilizing systems in which the plant matrices $A_c$ and $A_o$ commute, though the argument is more involved.  In particular, the stochastic dominance inequality \eqref{ineq:z} no longer holds for all time.  Indeed, the greedy algorithm can be suboptimal for commuting systems, but the sub-optimality only grows as a polynomial with respect to time, and as such the existence of an exponentially stabilizing policy still implies that the greedy policy exponentially stabilizes the system.  However, formally proving this result requires a long, technical argument with many algebraic details, and discussing it further would take us too far from the main path of the results we wish to present in this text.  As such, we discuss it no further here, and it leave it as future work.
	
	\subsection{Predictive Dwell Time Policies}
	\label{subsec:heuristic_dwell}
	An essential feature of nonscalar linear systems which separates them from scalar linear systems is a lack of multiplicative commutativity.  Without being able to commute the multiplication inherent in detailing the system's dynamics, it is difficult to find a tractable representation of the system's evolution.  In particular, where for scalar systems, we have that $x_u(t)$ is a function of only the number of attained loop closures through time $t$ and the initial condition $x_u(0),$ for general systems, this is not the case.  For emphasis, we write 
	\begin{equation}
	\label{eq:mode_decomp}
	x_u(t + k) = \prod_{j = 1}^{\ModeChangeCount(t,k)} A_{\Mode(j)}^{\ModeTime} x_u(t),
	\end{equation}
	where $\ModeChangeCount(t,k)$ is the number of mode changes experienced by the plant on the interval $[t, t+k],$ $\Mode(j)$ is the mode the system is operating in after the $(j-1)$'st mode change, and $\ModeTime$ is the number of time slots in which the system remains in the $j$'th mode.  Equation \eqref{eq:mode_decomp} demonstrates that the sequence of dwell times $\{\ModeTime\}_{j \in \ModeChangeCount(t,k)}$ are central in defining the map which takes $x_u(t)$ to $x_u(t + k).$  As such, they play a central role in our considerations in this section.  
	
	We now define a class of transmission policies informed by the decomposition \eqref{eq:mode_decomp}, insofar that it emphasizes the importance of keeping the system in the closed-loop mode for significant periods of time without interruption.  Essentially, dwelling in the stable mode for a long period of time allows the system to overcome the possibility of polynomial growth introduced by the switching, by way of allowing the exponential decay induced by the stability of the closed-loop matrix to have enough time to dominate the evolution of the plant.  We refer to the policies we study as predictive dwell time transmission policies, defined as follows: 
	
	\begin{defn}[Predictive Dwell Time Policy]
		\label{defn:dwell_pol}
		Choose some desired dwell time $k,$ and some probability $p \in [0,1].$  We define the predictive dwell time transmission policy with parameters $k$ and $p$ to be the policy which uses exactly $\energythresh$ units of energy at the first moment which the sensor detects that it can attempt $k$ consecutive loop closures with at least probability $p,$ and each moment thereafter until it lacks sufficient energy to do so any longer.  
		
		That is, we define the predictive dwell time policy with parameters $k$ and $p$ as the memoryless transmission policy
		\begin{equation}
		\begin{aligned}
		&u(\energy | B(t), L(t), F(t)) \triangleq \\
		&\begin{cases}
		1, & \energy = \energythresh, B(t) + h(L(t)) \geq \energythresh,F(t) = 0, \phi_{k,s} \geq p;\\
		1, & \energy = 0, B(t) + h(L(t)) \geq \energythresh, F(t) = 0, \phi_{k,s} < p;\\
		1, & \energy = \energythresh, B(t) + h(L(t)) \geq \energythresh, F(t) = 1;\\
		1, & \energy = 0, B(t) + h(L(t)) < \energythresh;\\
		0, & \texttt{otherwise}.
		\end{cases}
		\end{aligned}
		\end{equation}
		where we have defined the symbol $\phi_{k,s}$ for the probability that the process will have sufficient energy available to attempt $k$ consecutive loop closures, given an initial process state $s,$ and that the system will use exactly $\energythresh$ units of energy at each time in order to do so, i.e.
		\begin{equation}
		\label{eq:dwell_time_prob}
		\phi_{k,s} \triangleq \Pr(\cap_{\tau= 0}^{k - 1} \{\mathcal{B}(\tau) + h(L(\tau)) \geq \energythresh\}| s),
		\end{equation}
		where $\mathcal{B}(\tau)$ represents the battery charge state under the policy $\tau$ units of time into the future, i.e. 
		\begin{equation*}
		\mathcal{B}(\tau) \triangleq 
		\begin{cases}
		b, &\tau = 0;\\
		\lproj \mathcal{B}(\tau-1) + h(L(\tau-1)) - \energythresh \rproj_{0}^{\Bcap}, & \tau > 0.
		\end{cases}
		\end{equation*} \oprocend
	\end{defn}
	
	This policy, in particular, is one of interest in that it includes the greedy transmission policy developed in Section \ref{subsec:scalar_policy} in the special case $k = 1.$  More importantly, it is not obvious from inspection of Definition \ref{defn:dwell_pol} that predictive dwell time policies can be computed efficiently for arbitrary problems.  Specifically, a \naive method of computing the dwell time probabilities $\{\phi_{k,s}\}$ would enumerate all possible sample paths of the embedded state-space process $\mathcal{S}$ over the interval $[t,t+k-1],$ and compute the required probability by summing over the set of samples which have the sensor transmitting feedback information on $k$ consecutive time slots.  The complexity of such enumeration is exponential in $k,$ and as such, would be inefficient.  As such, it is important that we verify that such policies can be computed efficiently, if we are to think of them as useful.  We now show how dynamic programming can be used in order to do so.
	
	In this spirit, define the process $\{G(t) = (B(t), L(t))\},$ with transition probabilities given by
	\begin{equation}
	\label{eq:g_trans}
	\begin{aligned}
	\Gtrans(g^\prime,g) &\triangleq \Pr(B(t) = b^\prime, L(t) = \ell^\prime | B(t) = b, L(t) = \ell),\\
	&= \mathbbm{1}_{\{b^{\prime} = \lproj b + h(\ell) - \energythresh \rproj_{0}^{\Bcap} \}} \Ltrans(\ell^\prime, \ell),
	\end{aligned}
	\end{equation}
	and define the set $\Target$ as the subset of states of $G$ such that the system has enough energy to transmit feedback, i.e.
	\begin{equation}
	\Target \triangleq \{(b,\ell) \, | \, b + h(\ell) \geq \energythresh \}.
	\end{equation}
	Defining $\MembershipEvent{t}$ to be the event that the process $G$ is in $T$ at time $t,$ i.e. $\MembershipEvent{t} \triangleq \{G(t) \in T\},$ it suffices to demonstrate that we can compute the probability $\Pr(\cap_{t = 0}^{k-1} \MembershipEvent{t}| G(0))$ tractably with respect to the planning horizon $k.$   By applying the chain rule of conditional probability (see, e.g., \cite[Chapter 2]{Venkatesh2013}), we have
	\begin{equation}
	\label{eq:update_prob}
	\begin{aligned}
	\Pr_{G(0)}(\cap_{t = 0}^{k-1} \MembershipEvent{t}) = \Pr_{G(0)}(\MembershipEvent{k-1} |\cap_{t = 0}^{k-2} \MembershipEvent{t})\Pr_{G(0)}(\cap_{t = 0}^{k-2} \MembershipEvent{t}).
	\end{aligned}
	\end{equation}
	Supposing that we have the value $\Pr_{G(0)}(\cap_{t = 0}^{k-2} \MembershipEvent{t})$ stored in a dynamic programming table, we need only to demonstrate that $\Pr_{G(0)}(\MembershipEvent{k-1}|\cap_{t = 0}^{k-2} \MembershipEvent{t})$ can be computed efficiently.  By decomposing the event $\MembershipEvent{k-1},$ we get
	\begin{equation}
	\label{eq:aggregate_set}
	\Pr_{G(0)}(\MembershipEvent{k-1} |\cap_{t = 0}^{k-2} \MembershipEvent{t}) = \sum_{g \in T} \Pr_{G(0)}(G(k-1) = g |\cap_{t = 0}^{k-2} \MembershipEvent{t})
	\end{equation}
	holds.  By applying Bayes' rule to $\Pr_{G(0)}(G(k-1) = g |\cap_{t = 0}^{k-2} \MembershipEvent{t}),$ we have the identity
	\begin{equation}
	\label{eq:compute_marg}
	\begin{aligned}
	&\Pr_{G(0)}(G(k-1) = g |\cap_{t = 0}^{k-2} \MembershipEvent{t}) = \\
	&\hspace{20 pt} \frac{\Pr_{G(0)}(G(k-1) = g,\MembershipEvent{k-2} | \cap_{t = 0}^{k-3} \MembershipEvent{t}) \Pr_{G(0)}(\cap_{t = 0}^{k-3} \MembershipEvent{t}) }{\Pr_{G(0)}(\cap_{t = 0}^{k-2} \MembershipEvent{t}) },
	\end{aligned}
	\end{equation}
	in which we note that $\Pr_{G(0)}(G(k-1) = g,\MembershipEvent{k-2} | \cap_{t = 0}^{k-3} \MembershipEvent{k})$ is the only term which has not yet been explicitly computed and stored.  We address this by applying the chain rule a final time, to arrive at the identity
	\begin{equation}
	\label{eq:compute_joint}
	\begin{aligned}
	&\Pr_{G(0)}(G(k-1) = g,\MembershipEvent{k-2} | \cap_{t = 0}^{k-3} \MembershipEvent{k}) = \\
	& \hspace{10 pt} \sum_{g^\prime \in T} \Gtrans(g,g^\prime) \Pr_{G(0)}(G(k-2) =  g^\prime | \cap_{t = 0}^{k-3} \MembershipEvent{k}),
	\end{aligned}
	\end{equation}
	which depends only explicitly on the transition matrix $\Gtrans,$ and the terms $\Pr(G(k-2) =  g^\prime | \cap_{t = 0}^{k-3} \MembershipEvent{t}, G(0)),$ which we have explicitly computed in the calculations for $\Pr_{G(0)}(\cap_{t = 0}^{k-2} \MembershipEvent{k}).$  Algorithm \ref{alg:prob_comp} summarizes computing $\Pr_{G(0)}(\cap_{t = 0}^{k-1} \MembershipEvent{t})$ by the method just described.  By inspecting our argument, we have that for each fixed value $k,$ we have $O(|\mathcal{S}|^2)$ computations.  Hence, the total complexity of computing $\Pr_{G(0)}(\cap_{t = 0}^{k-1} \MembershipEvent{t})$ by Algorithm \ref{alg:prob_comp} is $O(k|\mathcal{S}|^2).$
	
	\begin{algorithm}
		Initialization:
		\begin{algorithmic}[1]
			\item Define the process $\{G(t) = (B(t),L(t))\}$ as in \eqref{eq:g_trans};
			\item Define the set $\Target =  \{(b,\ell) \, | \, b + h(\ell) \geq \energythresh \};$
			\item Define the event $\MembershipEvent{t} = \{G(t) \in \Target \};$
			\item Compute $\Pr_{G(0)}(\MembershipEvent{0}) = \mathbbm{1}_{\{G(0) \in T\}};$
			\item Compute $\Pr_{G(0)}(G(1) = g^\prime) = \Gtrans(g^\prime,G(0));$
			\item Compute $\Pr_{G(0)}(\MembershipEvent{1}) = \sum_{g^\prime \in T} \Pr_{G(0)}(G_1 = g^\prime).$
		\end{algorithmic}
		
		For $j \in [2,k-1]:$
		\begin{algorithmic}[1]
			\item Compute $\Pr_{G(0)}(G(t) = g |\cap_{\tau = 0}^{j - 1} \MembershipEvent{\tau})$ by \eqref{eq:compute_joint} and \eqref{eq:compute_marg}.
			\item Compute $\Pr_{G(0)}(\MembershipEvent{j} | \cap_{\tau = 0}^{j-1} \MembershipEvent{\tau})$ by \eqref{eq:aggregate_set}.
			\item Compute $\Pr_{G(0)}(\cap_{\tau = 0}^{j} \MembershipEvent{\tau})$ by \eqref{eq:update_prob}.
		\end{algorithmic}	
		
		\caption{Set Membership Probability Computation} \label{alg:prob_comp}
	\end{algorithm}
	
	Since for each $s,$ we need to compute a value of $\phi_{k,s},$ and the size of $\mathcal{S}$ grows linearly with the size of each component of the process $\{S(t)\},$ we have that for a fixed $k$ and $p,$ we may efficiently compute the predictive dwell time transmission policy.  We record this in the following theorem.
	
	\begin{theorem}[Dwell Time Probability Computation Complexity]
		\label{thm:dwell_comp}
		The worst-case computational complexity of computing the dwell time probabilities $\{\phi_{k,s}\}_{s \in \mathcal{S}}$ required to compute the predictive dwell time transmission policy given in Definition \ref{defn:dwell_pol} for specified parameters $k$ and $p$ is $O(k |\mathcal{S}|^3),$ and can be attained by implementing Algorithm \ref{alg:prob_comp}. 
	\end{theorem}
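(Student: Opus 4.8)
The plan is to establish the bound in two stages: first count how many genuinely distinct dwell-time probabilities must be computed, and then bound the cost of each individual computation by appealing to the analysis of Algorithm \ref{alg:prob_comp} already carried out in the discussion preceding the theorem.

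First I would observe that, by the definition \eqref{eq:dwell_time_prob}, the quantity $\phi_{k,s}$ depends on the state $s$ only through its battery and latent-harvesting coordinates $(b,\ell)$: the auxiliary battery trajectory $\mathcal{B}(\tau)$ and the harvested energy $h(L(\tau))$ appearing in \eqref{eq:dwell_time_prob} are fully determined by the initial pair $(b,\ell)$ together with the latent dynamics $\Ltrans$, and are insensitive to the loop-closure and transmission-history coordinates of $s$. Consequently the family $\{\phi_{k,s}\}_{s \in \mathcal{S}}$ takes at most $|[\Bcap]_0 \times \mathcal{L}|$ distinct values, one for each admissible initial condition $g = (b,\ell)$ of the reduced process $\{G(t)\}$ defined in \eqref{eq:g_trans}. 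Since $|[\Bcap]_0 \times \mathcal{L}| = |\mathcal{S}|/4 = O(|\mathcal{S}|)$, there are only $O(|\mathcal{S}|)$ distinct probabilities to evaluate.

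Next I would verify that each such probability is computed correctly and efficiently by Algorithm \ref{alg:prob_comp}. For a fixed initial condition $G(0) = g$ one has $\phi_{k,s} = \Pr_{G(0)}(\cap_{t=0}^{k-1} \MembershipEvent{t})$, and I would argue by induction on the stage index $j$ that the recursion implemented by the algorithm returns the correct value: the chain-rule identity \eqref{eq:update_prob} reduces the horizon-$k$ probability to the horizon-$(k-1)$ probability times a one-step conditional factor, the Bayes step \eqref{eq:compute_marg} expresses the required conditional marginal over $\Target$ in terms of quantities tabulated at the previous stage, and the propagation \eqref{eq:compute_joint} advances those marginals one step through the transition kernel $\Gtrans$. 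This confirms correctness. For the cost, each of the $k$ stages evaluates \eqref{eq:compute_joint} for every target state $g \in \Target$, and each such evaluation is a sum of $O(|\mathcal{S}|)$ terms indexed by predecessor states $g^\prime \in \Target$, giving $O(|\mathcal{S}|^2)$ work per stage and hence $O(k|\mathcal{S}|^2)$ per initial condition, exactly the per-computation bound established just before the theorem.

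Finally I would combine the two counts: running Algorithm \ref{alg:prob_comp} once for each of the $O(|\mathcal{S}|)$ distinct initial conditions costs $O(|\mathcal{S}|) \cdot O(k|\mathcal{S}|^2) = O(k|\mathcal{S}|^3)$, which is the claimed worst-case bound. The step that requires genuine care is the inductive correctness argument for the recursion: because the conditioning event is the \emph{intersection} $\cap_{t=0}^{j} \MembershipEvent{t}$ rather than a single-time marginal, the bookkeeping in \eqref{eq:compute_marg}--\eqref{eq:compute_joint} must track both the unnormalized joint masses restricted to $\Target$ and the normalizing set-membership probabilities simultaneously, and I would need to confirm that the quantities tabulated at each stage are precisely those consumed at the next. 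Once this invariant is pinned down, the complexity count is immediate.
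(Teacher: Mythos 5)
Your proposal is correct and follows essentially the same route as the paper: the paper likewise bounds one run of Algorithm \ref{alg:prob_comp} by $O(k|\mathcal{S}|^2)$ (via the per-stage $O(|\mathcal{S}|^2)$ cost of the recursion \eqref{eq:update_prob}--\eqref{eq:compute_joint}) and then multiplies by the $O(|\mathcal{S}|)$ initial conditions for which $\phi_{k,s}$ must be evaluated, yielding $O(k|\mathcal{S}|^3)$. Your additional observation that $\phi_{k,s}$ depends on $s$ only through $(b,\ell)$, so only $|\mathcal{S}|/4$ distinct values arise, is a minor sharpening that does not change the bound or the structure of the argument.
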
 
	
	We close this section by constructing a minimal example demonstrating that the greedy policy outlined in Section \ref{subsec:scalar_policy} does not suffice to stabilize all possible non-scalar energy harvesting control systems.  Choose $n = 2,$ and let
	\begin{equation*}
	A_c = 
	\left[\begin{matrix}
	0.093 & 0.558 \\
	0.558 & 0.186
	\end{matrix} \right], \,
	A_o = 
	\left[
	\begin{matrix}
	1.050 & 1.000\\
	0.000 & 1.000
	\end{matrix}
	\right ],	
	\end{equation*}
	take the harvesting process $H$ to have the latent space process with transition matrix
	\begin{equation*}
	\Ltrans = \left[
	\begin{matrix}
	0.01 & 0.99\\
	0.99 & 0.01
	\end{matrix}
	\right],
	\end{equation*}
	and energy function $h = \ell - 1,$ and let the packet reception probability be $\lambda = 0.98.$  The reader may verify that while $A_c$ is stable, its operator norm is strictly greater than one, which means there are vectors which grow in Euclidean norm when left-multiplied by $A_c.$  As such, we may expect that we need to link together more than one loop closure in order to induce decay, and as such there may be energy sources for which a predictive dwell time policy may stabilize the system, where the greedy policy does not.  A comparison of the evolution of the system under the greedy transmission policy and the predictive dwell time policy with parameters $k=2$ and $p =0.5$ is given in Figure \ref{fig:dwell_pol_comp}.
	
	Despite outperforming the predictive dwell time policy in terms of the total number of attained loop closures in the simulated time interval, the greedy policy under-performs the dwell time policy in terms of stability.  This example summarizes the essential difficulty in finding stabilizing policies for EHCS with nonscalar plants: optimizing the number of loop closures is not enough to guarantee stability if the plant's dynamics are nonscalar.  However, accounting for the interplay between the modes of the system in the policy design can help mitigate the difficulties encountered in creating a good transmission policy.
	
	\begin{figure}
		\begin{subfigure}[b]{\linewidth} 
			\centering\large 
			\includegraphics[width=\textwidth]{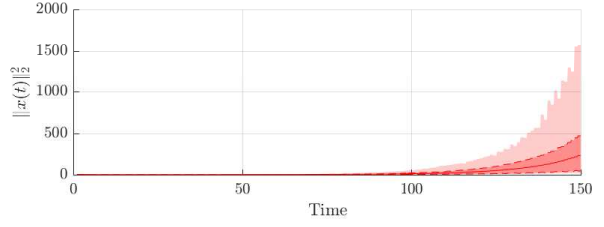}
			\caption{Plant state evolution with greedy policy.}
			\label{fig:high_dim_greed} 
		\end{subfigure}
		\begin{subfigure}[b]{\linewidth} 
			\centering\large 
			\includegraphics[width=\textwidth]{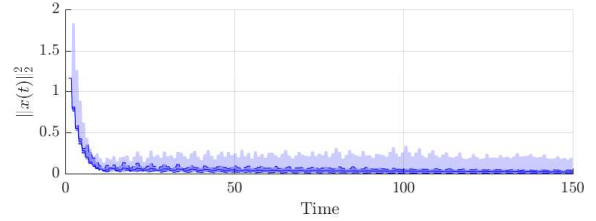}
			\caption{Plant state evolution with the predictive dwell time policy.}
			\label{fig:high_dim_dwell} 
		\end{subfigure} 
		\caption{\small  A plot of a $10\text{k}$ sample Monte Carlo simulation of the example EHCS, comparing the evolution of the plant state of the process with the dwell time transmission policy against the evolution of the process with the greedy transmission policy.  The mean trajectory given by a solid line, the $80\%$ confidence interval in dark shade, with the $98\%$ confidence interval given in lighter shade.  It is clear that the dwell time policy stabilizes the system, whereas the greedy policy does not.} \label{fig:dwell_pol_comp}
	\end{figure}
	
	\begin{remark}[Searching for Stabilizing Policies]
		{\rm
			As noted in Remark \ref{rem:co_des}, searching for a stabilizing memoryless transmission policy without fixing a particular Lyapunov function is a nonconvex optimization problem, and so in general may be difficult to solve.  However, one \emph{can} efficiently search over the set of all predictive dwell time policies, up to a fixed maximum desired dwell time $K_{\max}.$  That is, the number of distinct predictive dwell time policies grows polynomially with respect to the number of states in $\mathcal{S},$ and $K_{\max},$ and so each such policy can be tested individually to determine if any such policy stabilizes the system.  Formal proof of this fact follows from noting that for any fixed $k,$ the particular transmission policy is fully determined by which subset of states begins a transmission sequence.  This partitions the unit interval into at most $|\mathcal{S}| + 1$ disjoint subintervals, where for all $p$ in a particular subinterval, the induced policy is the same.  As in general, there is no guarantee that this class of policies \emph{must} contain a stabilizing policy (as is the case where $n = 1$), we do not dedicate more space to formalizing this concept in greater detail here. \oprocend
		}
	\end{remark}
	
	\section{Example Energy Harvesting Source Models}
	\label{sec:examples}
	
	We now detail how energy harvesting sources can be modeled within the mathematical framework defined in Section \ref{subsec:harvesting}.  While we have in general placed no assumptions on the transition matrix $\Ltrans$ other than it be column stochastic, the statistical models we study in this section all have the following block structure:
	\begin{equation}
	\label{eq:block_struc}
	\Ltrans = 
	\left(
	\begin{matrix}
	0      & 0      & \dots  & 0 & Q_{\Pd} \\
	Q_1    & 0      & \ddots & \vdots & 0     \\
	0      & Q_2    & \ddots & \vdots & 0\\
	\vdots & \vdots & \ddots &  0 & \vdots\\
	0      & 0  & \dots  &    Q_{\Pd-1} & 0
	\end{matrix} \right),
	\end{equation}
	where for each $j$ in $[\Pd],$ we have that $Q_j$ is an $n_{j+1} \times n_{j}$ column stochastic matrix.  By considering \eqref{eq:block_struc} in detail, one may note that the transitions of the encoded Markov process are such that states $1$ through $n_1$ transition exclusively to states $(n_1 + 1)$ through $n_1 + n_2,$ and so forth.  This naturally encodes a time-varying process which is $\Pd$-periodic: any particular state may only be revisited by the process after some multiple of $\Pd$ time slots have passed since its last visit.  We see in the following subsections how this structure allows a user to encode sources which are time-varying.
	
	\subsection{Deterministic Energy Harvesting Sources}
	\label{subsec:deterministic}
	In this subsection, we construct a general model for a situation in which energy arrives at the sensor according to a deterministic, periodic schedule.  This model is appropriate in the case where the end user re-charges the device according to a fixed schedule.  This is the case in some interesting applications, such as \emph{in vivo} biological sensors, which can be recharged by an end-user via an inductive source \cite{Zhang2009}. 
	
	Consider a periodic source with period length $\Pd,$ defined by the periodic sequence $\{c_\tau\}_{\tau = 1}^{\Pd}.$  We may define the sequence $\{c_\tau\}_{\tau = 1}^{\Pd}$ by specifying its first $\Pd$ elements, and then taking $c_t = c_{\tau}$ for whichever $\tau$ is the unique integer less than or equal to $\Pd$ which satisfies the equality $t = \tau + j\Pd$ for some natural number $j.$  The latent space of the energy harvesting process is used for the purpose of indexing time, hence
	\begin{equation}
	\Ltrans 
	= 
	\left(
	\begin{matrix}
	0 & 1\\
	I_{\Pd-1} & 0
	\end{matrix}
	\right),
	\end{equation}
	which we note to be a special case of \eqref{eq:block_struc}, where $Q_j = 1$ for all $j$ in $[\Pd].$  Note that by design, this particular choice of $\Ltrans$ is a permutation matrix, which induces the latent state variable process $\{\ell(t)\}$ to follow the dynamics
	\begin{equation}
	\ell(t+1) = \mod(\ell(t),\Pd) + 1,
	\end{equation}
	which serves to increment $\ell(t)$ around the ring $\{1,2,\dots,\Pd,1,\dots\}.$  By defining $h(\ell) = c_{\ell},$ we then have that the energy harvesting process with $(\Ltrans,h)$ exactly models a source with fixed, deterministic, periodic increments $\{c_{\tau}\}_{\tau = 1}^{\Pd}.$  
	
	As a particular example, we consider a source in which $\Pd = 24,$ $c_\tau = 5$ for all $\tau \in \{1 + 24j\}_{j \in \Nat}$ and $c_\tau = 0$ for all $\tau \notin \{1 + 24j\}_{j \in \Nat}.$  This models the situation in which an end user deterministically recharges the sensor once every day, and endows it with $5$ units of energy.  Using the algorithmic techniques developed in \cite{Watkins2017b} with $A_c = 0.8,$ $A_o = 1.1,$ $\energythresh = 2,$ and $\lambda = 0.98$ we find that the critical battery capacity - that is, the minimum battery capacity required to guarantee the existence of a stabilizing transmission policy - is $2.$  
	
	The results of a simulation of this system under the closed-loop evolution of the system under the greedy transmission policy is given in Figure \ref{fig:det_sim}, where the disturbance process $\{\disturbance(t)\}$ is a sequence of uniform random variables on $[-0.5,0.5].$  Note that the periodicity of the energy harvesting source is apparent in the statistics of the state process.  After each recharging event occurs, the norm of the state decays exponentially quickly.  Between recharging events, the norm increases steadily.  In the case where the battery is larger than the critical battery capacity, the plant state process is stable; in the other, it is not.  Note that the periodicity in the processes' evolution may not be optimal performance, which suggests that an interesting line of future research may be in designing transmission policies which are guaranteed to be stable, but also optimize system performance.
	\begin{figure}
		\begin{subfigure}[b]{\linewidth} 
			\centering\large 
			\includegraphics[width=\textwidth]{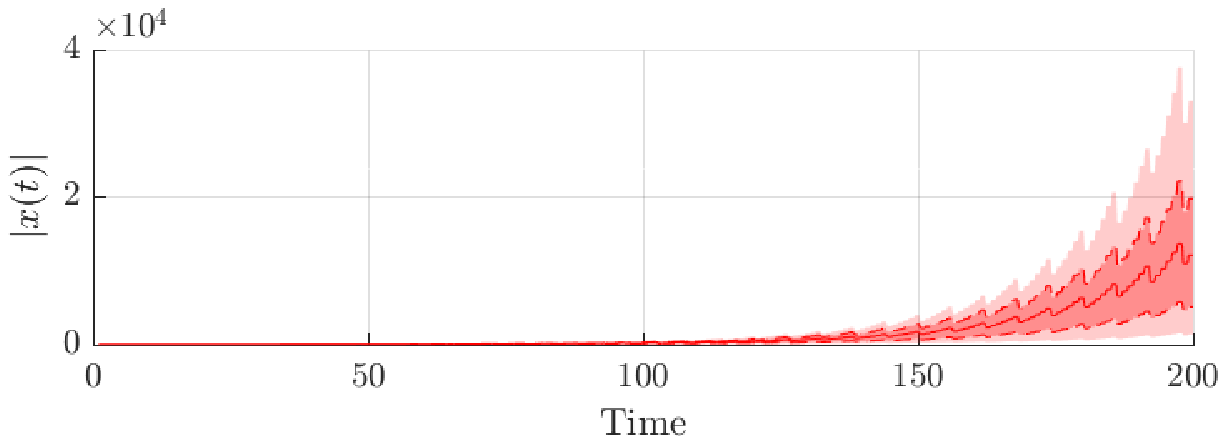}
			\caption{Battery Capacity $\Bcap = \Bcrit-1 = 1.$}
			\label{fig:1a} 
		\end{subfigure} 
		\begin{subfigure}[b]{\linewidth} 
			\centering\large 
			\includegraphics[width=\textwidth]{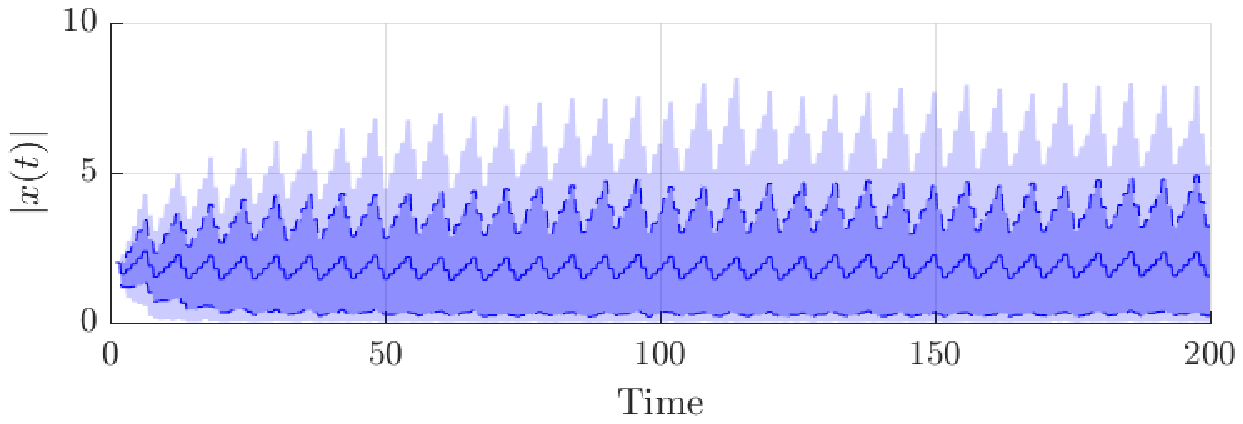}
			\caption{Battery Capacity $\Bcap = \Bcrit = 2.$}
			\label{fig:1b} 
		\end{subfigure}
		\caption{\small  A plot of a $10\text{k}$ sample Monte Carlo simulation of the example EHCS, with the mean trajectory given by a solid line, the $80\%$ confidence interval in dark shade, with the $98\%$ confidence interval given in lighter shade.} \label{fig:state_evo}
		\caption{A simulated EHCS with a deterministic recharging source, as detailed in Section \ref{subsec:deterministic}.  Note that the simulations display periodicity in the statistics of the process, which are due to the periodicity of the source bleeding onto the plant state dynamics through the greedy transmission policy.} \label{fig:det_sim}
	\end{figure}
	
	\subsection{Ergodic Energy Harvesting Sources}
	\label{subsec:ergodic}
	
	In this subsection, we show how the proposed model for energy harvesting sources can be used to model ergodic energy sources.  These can be useful in several application areas.  For example, several works propose ergodic Markov chains as a good model for the dynamics of the intensity of wind speed \cite{Carpinone2015,Tang2015,Xie2017}.  As such, if the sensor is supplied with energy by a small-scale wind turbine, we may expect an ergodic Markov chain to be a good statistical model for the energy harvesting process.
	
	Note that any finite-state ergodic Markov process can be represented as a Markov chain with a finite, irreducible transition matrix (see, e.g., \cite{Stroock2005}).  As a particular example, we consider the EHCS with $A_c = 0.95,$ $A_o = 1.02,$ $\lambda = 0.98,$ $\energythresh = 4,$ $\{\disturbance(t)\}$ as a process of i.i.d. uniform random variables on $[-0.5,0.5],$ and $\{H(t)\}$ as a process with latent Markov process $\{L(t)\}$ with transition matrix
	\begin{equation*}
	\Ltrans = 
	\left(
	\begin{matrix}
	0.70 & 0.15 & 0.00 & 0.00\\
	0.30 & 0.70 & 0.15 & 0.00\\
	0.00 & 0.15 & 0.70 & 0.30\\
	0.00 & 0.00 & 0.15 & 0.70
	\end{matrix} \right)_,
	\end{equation*}
	and energy function $h(\ell) = \ell-1.$  Note that this particular choice of $\Ltrans$ corresponds to the case in which $n = n_1,$ as we may simply take $\Ltrans = S_1.$  As constituted, $\{H(t)\}$ is a skip-free random walk on $[\Hmax]_0,$ and can be interpreted intuitively as a stochastic model for wind speed.  We determine the critical battery capacity threshold to be $3$ by using the techniques developed in \cite{Watkins2017b}.
	
	The evolution of $\{x_{u_g}(t)\}$ with initial condition $x_{u_g}(0) = 10,$ and $\Bcap$ varying from $\Bcrit -1$ to $\Bcrit$ is given in Figure \ref{fig:state_evo}.  By inspection, one can see that for $\Bcap = 2,$ $\{x_{u_g}(t)\}$ is unstable, as the sample expectation of the norm grows without bound, whereas for $\Bcap = 3$ the system is stable, with the expectation remaining below the bound $15.$  Unlike the case of a periodic source, one can note that the statistics appear to converge to a limiting distribution after an initial period of transience.  This is due to the ergodicity of the stochastic source model, which induces a stationary distribution in the energy arrival process, and hence in the state space process.
	\begin{figure}
		\begin{subfigure}[b]{\linewidth} 
			\centering\large 
			\includegraphics[width=\textwidth]{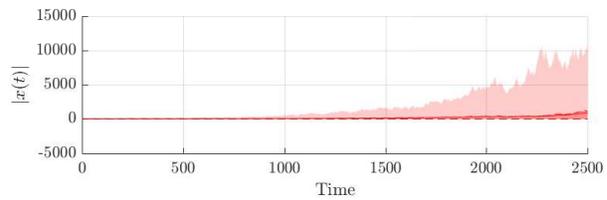}
			\caption{Battery Capacity $\Bcap = \Bcrit-1 = 2.$}
			\label{fig:1a} 
		\end{subfigure} 
		\begin{subfigure}[b]{\linewidth} 
			\centering\large 
			\includegraphics[width=\textwidth]{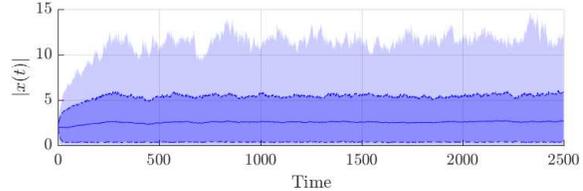}
			\caption{Battery Capacity $\Bcap = \Bcrit = 3.$}
			\label{fig:1b} 
		\end{subfigure}
		\caption{\small  A plot of a $10\text{k}$ sample Monte Carlo simulation of the example EHCS, of the system detailed in Section \ref{subsec:ergodic}.  The mean trajectory for each plot is given by a solid line, the $80\%$ confidence interval in dark shade, with the $98\%$ confidence interval given in lighter shade.  Note that unlike in the cases of periodic sources, there is no apparent periodicity in the plant's evolution in this case.} \label{fig:ergodic}
	\end{figure}
	
	\subsection{Periodic Stochastic Energy Harvesting Sources}
	\label{subsec:periodic}
	In this subsection, we detail how our proposal of using processes of the type detailed in Section \ref{subsec:harvesting} for modeling stochastic energy harvesting sources can be applied to sources with macroscopic, stochastic periodic fluctuations.  This is the most abstract level of generality encapsulated by models with transition matrices structured as \eqref{eq:block_struc}.  Moreover, these are features common to applications which are beholden to daily use or availability cycles.  As a concrete example, we may consider the construction of a model of solar light intensity, wherein between sunset and sunrise there is insufficient light available to harvest significant energy.
	
	In this setting, we may assume that the intensity incident to the energy harvesting device's solar cell decomposes into two effects: the intensity which would be experienced by the solar cell on an ideal, cloudless day, and the dampening effect of clouds.  In light of this, we define $d(t)$ to be the solar intensity experienced by the sensor at time $t,$ on an ideal, cloudless day.  To model the effect of cloud coverage, we assume that the dampening effect of clouds evolves as an ergodic Markov chain $C$ with transition matrix $C,$ and decreases the intensity of the incident sunlight additively with respect to the ideal value $d(t),$ where in the case that at a particular time the cloud loss is more than $d(t),$ no energy is received. 
	
	By structuring the latent space transition matrix with the block structure given by \eqref{eq:block_struc} with $Q_j = C$ for all $j$ in $[\Pd],$ we see that we have a periodic stochastic process with periodicity $\Pd,$ and $n$ possible states at each time.  Note that, as before, the block structure given by \eqref{eq:block_struc} allows us to implicitly keep track of time, by way of noting in which latent state the process currently resides.  As such, we defined $t(\ell) \in [\Pd]$ to be the time with respect to the period of the process associated to the latent state $\ell.$  Letting $c(\ell)$ represent fraction of maximum cloud coverage dampening associated to the latent state $\ell,$ we have that the total amount of energy harvested by the sensor at a latent state $\ell$ is given by 
	\begin{equation}
	h(\ell) = \lproj \Intensity d(\tau(\ell)) - \Dampening c(\ell)\rproj_0^{\infty},
	\end{equation}
	where we implicitly define $\tau$ as a function which maps the latent state $\ell$ to the element of the period associated to $\ell,$ $\Intensity$ to be the maximum intensity of sunlight on a cloudless day, and $\Dampening$ to be the maximum dampening effect placed on the solar intensity due to clouds.
	
	As a particular example, we may take $\Intensity = 5,$ $\Dampening = 4,$ $\Pd = 48,$ $d(\tau)$ as
	\begin{equation}
	d(\tau) = \sin \left(\frac{2 \pi \tau}{48} \right),
	\end{equation}
	$L(t)$ as the ergodic Markov chain taking values on $\{1,2,3,4\},$ with transition matrix
	\begin{equation}
	\Ltrans =
	\left[
	\begin{matrix}
	0.70 & 0.15 & 0.00 & 0.00\\
	0.30 & 0.70 & 0.15 & 0.00\\
	0.00 & 0.15 & 0.70 & 0.30\\
	0.00 & 0.00 & 0.15 & 0.70
	\end{matrix}
	\right],
	\end{equation}
	loss function $c(\ell) = \frac{(\ell - 1)}{|\mathcal{L}| - 1},$ and transmission energy $\energythresh = 1.$  Note that the choice of $d$ as a trigonometric function of time is supported well by literature \cite{Iqbal1983}, however the function used usually explicitly depends on the coordinates of the device on the Earth, and its angle with respect to the surface, as well as the time of year.  We have chosen a sinusoid here for simplicity; other models can be incorporated just as easily.
	
	The simulated behavior of this model is given in Figure \ref{fig:solar_source}, where we have $A_o = 1.017,$ $A_c = 0.950,$ $\energythresh = 2,$ $\lambda = 0.98,$ and $\{\disturbance(t)\}$ as a sequence of i.i.d uniform random variables on $[-0.5,0.5].$  We see the macroscopic periodic effects we would expect to see of a solar charging process.  Namely, over each period of $24$ hours, there are approximately $12$ hours of sunlight of varying intensity, and $12$ hours of darkness, in which no energy is received by the sensor.  We plot the results of a simulated EHCS under the greedy power allocation policy in Figure \ref{fig:solar_sim}, we see the effects of this periodicity in a simulation of the system under the greedy transmission policy, in which we see performance degrade during the periods in which the system receives no energy, and performance improve when energy becomes available again.  However, as predicted, the state process remains bounded for all times when a sufficiently large battery is used, and becomes unbounded otherwise, where the required size of the battery may be calculated by the techniques in \cite{Watkins2017b}.  These observations support the theory presented earlier in the paper, and point to an area of future work, wherein stabilizing policies which optimize performance are investigated.
	
	\begin{figure}
		\begin{subfigure}[b]{\linewidth} 
			\centering\large 
			\includegraphics[width=\textwidth]{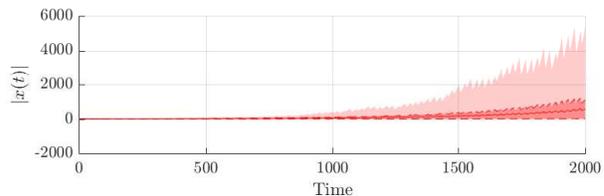}
			\caption{Battery Capacity $\Bcap = 0.$}
			\label{fig:1a} 
		\end{subfigure} 
		\begin{subfigure}[b]{\linewidth} 
			\centering\large 
			\includegraphics[width=\textwidth]{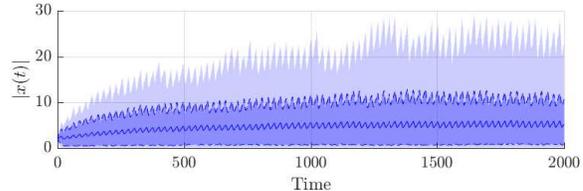}
			\caption{Battery Capacity $\Bcap = 1.$}
			\label{fig:1b} 
		\end{subfigure}
		\caption{A plot of $10$k sample Monte Carlo simulation of an example EHCS with a periodic stochastic source, as detailed in Section \ref{subsec:periodic}.  Note that the periodicity present in the source process is inherited by the plant state process, by way of passing through the greedy transmission policy.  Note also that for this system, the critical battery capacity was found to be $1,$ and which is confirmed by these simulations.} \label{fig:solar_sim}
	\end{figure}
	
	\section{Conclusions and Future Work}
	\label{sec:conclusion}
	In this paper, we established a computationally efficient means of certifying the stability of the evolution of a plant supplied with feedback signals by an energy harvesting sensor over a wireless communication channel under a fixed transmission policy.  We have shown that the developed test applies to any memoryless transmission policy.  As we have also proven that memoryless policies are all which are needed in order to stabilize scalar plants, and can be used to encode more complicated predictive policies capable of stabilizing more complicated plants, we believe it to be broad enough in this regard to be useful in practice.  
	
	Moreover, we have shown that the developed test applies to any system with an energy harvesting process which can be modeled as a function of a finite-state Markov chain, and that such processes can be used as models for several interesting sources including deterministic recharging, wind harvesting, and solar harvesting.  As such, the certification test we have developed is quite general in this regard as well, and we believe will be of use in future applications.  Future work can come in many directions, including considering a situation in which multiple sensors communicate to multiple plants, and generalizing the control model at the plant beyond simple linear feedback.
	\section*{Acknowledgments}
	{\small This is supported by the TerraSwarm Research Center, one of six centers supported by the STARnet phase of the Focus Center Research Program (FCRP), a Semiconductor Research Corporation program sponsored by MARCO and DARPA.}
	
	\bibliography{library}
	\bibliographystyle{ieeetr} 
	\appendix
	
	\subsection{Proof of Proposition \ref{prop:equiv}}
	\label{app:prop:equiv}
	By definition, exponential mean-square stability of $\{z_u(t)\}$ implies that for every initial condition $z_u(0),$ there exists some positive constant $\alpha_0 >0,$ and some constant in the open unit interval $\xi \in (0,1)$ such that $\E[z_u(t)^T z_u(t)] \leq \E[z(0)^T z(0)] \alpha \xi^t$ holds for all times $t.$  By expanding the dynamics of $\{z_u(t)\}$ appropriately, we see that
	\begin{equation}
	\begin{aligned}
	\E[z_u(t)^T z_u(t)] &\triangleq \E[z_u(0)^T (\Pi_{j = 0}^{t - 1} A_j)^T \left(\Pi_{j = 0}^{t - 1} A_j\right) z_u(0)] \\
	&\leq  \E[z(0)^T z(0)] \alpha \xi^{t}
	\end{aligned}
	\end{equation}
	holds as well.  By definition, we have that the expectation of $\{x_u(t)\}$ evolves as
	\begin{equation*}
	\begin{aligned}
	&\E[x_u^T(t) x_u(t)] \triangleq \E[x_u(0)^T (\Pi_{j = 0}^{t - 1} A_j)^T \left(\Pi_{j = 0}^{t - 1} A_j\right) x_u(0)] \\
	& +\sum_{j = 0}^{t-1} \E[\omega_j^T (\Pi_{k = j-1}^{t - 1} A_k)^T(\Pi_{k = j-1}^{t - 1} A_k) \omega_j],
	\end{aligned}
	\end{equation*}
	which after applying the linearity and cycle-invariance properties of the trace operator, along with the fact that the disturbance process is i.i.d. with second moment matrix $W$ becomes
	\begin{equation*}
	\begin{aligned}
	\E[x_u^T(t) x_u(t)] &\triangleq \E[x_u(0)^T (\Pi_{j = 0}^{t - 1} A_j)^T \left(\Pi_{j = 0}^{t - 1} A_j\right) x_u(0)] \\
	& + \sum_{j = 0}^{t-1} \Tr \left(\E[(\Pi_{k = j+1}^{t - 1} A_k)^T(\Pi_{k = j+1}^{t - 1} A_k)] W \right).
	\end{aligned}
	\end{equation*}
	By noting that for each time $j,$ the term $\Tr \left(\E[(\Pi_{k = j+1}^{t - 1} A_k)^T(\Pi_{k = j+1}^{t - 1} A_k)] W \right)$ corresponds exactly to an undisturbed EHCS with initial condition $\omega_j,$ we have that exponential mean-square stability of ${z_u(t)}$ implies that 
	\begin{equation}
	\label{ineq:x_dist1}
	\begin{aligned}
	\E[x_u^T(t) x_u(t)] &\leq \E[x_u^T(0) (\Pi_{j = 0}^{t - 1} A_j)^T \left(\Pi_{j = 0}^{t - 1} A_j\right) x_u(0)] \\
	&+ \sum_{j = 0}^{t-1} \alpha \Tr\left(W\right) \xi^{t-j-1}
	\end{aligned}
	\end{equation}
	holds.  Focusing now only on the rightmost sum after having factored out $\alpha \Tr\left( W\right),$ we see that
	\begin{equation}
	\label{ineq:x_dist2}
	\begin{aligned}
	\lim_{t \goesto \infty} \sum_{j = 0}^{t-1} \xi^{t-j-1} = \lim_{t \goesto \infty} \sum_{r = 0}^{t-1} \xi^{r} = \frac{1}{1 - \xi}
	\end{aligned}
	\end{equation}
	holds, where the equality comes from the well-known summation formula for geometric sums.  Noting that by definition, $z_u(0) = x_u(0),$ we have that the exponential mean-square stability of $\{z_u(t)\}$ implies that
	\begin{equation}
	\label{ineq:x_homo}
	\E[x_u^T(0) (\Pi_{j = 0}^{t - 1} A_j)^T \left(\Pi_{j = 0}^{t - 1} A_j\right) x_u(0)] \leq \E[x_u^T(0) x_u(0)] \alpha \xi^t 
	\end{equation}
	holds.  Putting \eqref{ineq:x_homo} together with \eqref{ineq:x_dist1} and \eqref{ineq:x_dist2} shows that
	\begin{equation}
	\begin{aligned}
	\E[x_u^T(t) x_u(t)] &\leq \E[x_u^T(0) x_u(0)] \alpha \xi^t + \frac{\alpha \Tr \left(W \right)}{1 - \xi}
	\end{aligned}
	\end{equation}
	holds, which confirms that $\{x_u(t)\}$ is exponentially mean-square ultimately bounded with $M = \frac{\alpha}{1 - \xi},$ as claimed.
	
	We now show that exponential mean-square ultimate boundedness of $\{x_u(t)\}$ implies exponential mean-square stability of $z_u(t).$  By definition, exponential mean-square ultimate boundedness of $\{x_u(t)\}$ implies that $\E[x_u^T(t)x_u(t)]$ is bounded asymptotically.  Hence, we have that the sum defining the contribution of disturbances to the state vector process is bounded, i.e.
	\begin{equation}
	\label{ineq:bound1}
	\lim_{t \goesto \infty} \sum_{j = 0}^{t-1} \Tr \left(\E[(\Pi_{k = j+1}^{t - 1} A_k)^T(\Pi_{k = j+1}^{t - 1} A_k)] W \right) < \infty,
	\end{equation}
	holds.  By defining the matrix $\Trans{k}{r}$ as
	\begin{equation}
	\Trans{t-j}{r} \triangleq \E[(\Pi_{k = 0}^{t - j} \Amode{k})^T(\Pi_{k =0}^{t - j} \Amode{k})|\mode{0} = r],
	\end{equation}
	where $r$ may take any value on the set of positive recurrent modes $\Recurrent,$ we have that \eqref{ineq:bound1} implies the weaker bound
	\begin{equation}
	\lim_{t \goesto \infty} \sum_{r \in \Recurrent} \sum_{j = 0}^{t-1} \Tr \left(\Trans{t-j}{r} W \right) \Pr(\mode{j+1} = r) < \infty.
	\end{equation}
	Since the mode transition process will enter the positive recurrent set almost surely in finite time, and thereafter each mode in the positive recurrent set of modes will have a strictly positive probability, we may assume that 
	\begin{equation}
	\label{ineq:bound2}
	\lim_{\bar{k} \goesto \infty} \sum_{k = 0}^{\bar{k}} \Tr \left(\Trans{k}{r} W \right) \pmin < \infty 
	\end{equation}
	holds for some $\pmin > 0,$ where if process does not start in the positive recurrent set, we must shift time by a finite amount for the above to hold.  The bound \eqref{ineq:bound2} implies that
	\begin{equation}
	\lim_{k \goesto \infty}  \Tr \left(\Trans{k}{r} W \right) = 0 
	\end{equation}
	holds.  We now show that this implies that $\Trans{k}{r}$ converges to $0$ in the limit of large $k.$  In particular, since $\Trans{k}{r}$ is symmetric positive semidefinite and $W$ is symmetric positive definite, we have that $\Trans{k}{r}$ posses a symmetric positive semidefinite square root, and $W$ possesses a symmetric positive definite square root.  Decomposing $\Trans{k}{r}$ and $W$ into their square roots, cycling the arguments of the trace operator, and rewriting demonstrates the identity 
	\begin{equation}
	\begin{aligned}
	\Tr \left(\Trans{k}{r} W \right) &= \Tr \left(\sqrt{W} \sqrt{\Trans{k}{r}} \sqrt{\Trans{k}{r}} \sqrt{W} \right)\\
	&= \sum_{i = 1}^{n} \|\sqrt{\Trans{k}{r}} [\sqrt{W}]_{i} \|_2^2.
	\end{aligned}
	\end{equation}
	Since $\sqrt{W}$ is positive definite, it follows that it is full rank.  Hence, at least one column $[\sqrt{W}]_{i^\star}$ of $\sqrt{W}$ is not in the nullspace of $\sqrt{\Trans{k}{r}}$ and thus $\Tr \left(\Trans{k}{r} W \right) > 0,$ unless $\Trans{k}{r} = 0.$  It then follows that $\lim_{k \goesto \infty}  \Trans{k}{r} = 0$ holds as claimed.  Moreover, since $$\E[z_u^T(t)z_u(t)| \mode{0} = r] = z_u^T(0)\Trans{t}{r} z_u(0),$$ for any choice $r \in \Recurrent,$ and the finite number of modes implies that the chain enters the positive recurrent set $\Recurrent$ in finite time, it follows that $\{z_u(t)\}$ is asymptotically mean-square stable.  Since $\{z_u(t)\}$ is an unperturbed Markov jump linear system, it holds that asymptotic mean-square stability and exponential mean-square stability are equivalent (see, e.g., \cite[Theorem 3.9]{Costa2005}).  Hence, the exponential mean-square ultimate boundedness of $\{x_u(t)\}$ implies the exponential mean-square stability of $\{z_u(t)\},$ as claimed. \oprocend 
	
	\subsection{Proof of Theorem \ref{thm:linear_prog}}
	\label{app:thm:linear_prog}
	We specialize \cite[Proposition 3.42]{Costa2005} to claim that a MJLS with mode process $\{S(t)\}$ on a state space $\mathcal{S}$ is mean-square stable if and only if there exist positive definite matrices $R_s$ such that the inequality
	\begin{equation}
	\label{ineq:mjls_stab}
	A_s^T \sum_{s^\prime \in \mathcal{S}} \Pr(S(t+1) = s^\prime | S(t) = s) R_{s^\prime} A_s \curlyleq R_s
	\end{equation}
	holds for all $s \in \mathcal{S}.$  Allowing $R$ to denote a set of matrices containing each $R_s,$ we note that since \eqref{ineq:mjls_stab} is affine in $R,$ we may scale any $R$ which satisfies \eqref{ineq:mjls_stab} by an arbitrary positive constant,  
	and so \eqref{ineq:mjls_stab} is equivalent to
	\begin{equation}
	\label{ineq:mjls_stab2}
	\begin{aligned}
	A_s^T \sum_{s^\prime \in \mathcal{S}} \Pr(S(t+1) = s^\prime | S(t) = s) R_{s^\prime} A_s - R_s \curlyleq v\\
	R_s \curlygeq I,\quad
	v < 0.
	\end{aligned}
	\end{equation}
	Since the system of inequalities \eqref{ineq:mjls_stab2} is affine in $R,$ we can make the left hand side of the first inequality of \eqref{ineq:mjls_stab2} an arbitrarily large negative number if \eqref{ineq:mjls_stab2} is satisfied for any $R.$  Hence, if the system's evolution is stable, then the semidefinite program \eqref{prog:sdp} is unbounded below.  Conversely, if \eqref{prog:sdp} is unbounded below, it follows that an $R$ exists such that \eqref{ineq:mjls_stab2} is satisfied, and the system's evolution is stable. We complete the proof by noting the preceding arguments immediately imply that the lower bound $v \geq -D$ of \eqref{prog:sdp} must saturate if and only if the MJLS is stable.\oprocend 
	
	\subsection{Proof of Lemma \ref{lem:pathwise_dominance}}
	\label{app:lem:pathwise_dominance}
	Consider an arbitrary element $\omega \in \Omega,$ and note that for the remainder of this proof, all stochastic process variables are evaluated with respect to $\omega,$ though we do not explicitly notate the dependence.  Letting $N_{u}^{\{S\}}(t)$ be a random variable denoting the number of successful loop closures which have occurred through time $t$ under transmission policy $u,$ and $N_{u}^{\{A\}}(t)$ be the number of attempted loop closures through time $t$ under transmission policy $u,$ we see that we have
	$N_{u}^{\{S\}}(t) = \sum_{\ell = 0}^{N_u^{\{A\}}(t)} g_\omega(\ell),$
	i.e., that the number of successful loop closures attained by a policy $u$ through time $t$ is simply the sum of the first $N_u^{\{A\}}(t)$ packet reception indicator random variables on the sample path.  
	
	Importantly, this implies that along any particular sample path $\omega,$ it suffices to check that $N_{u}^{\{A\}}(t) \leq N_{u^\prime}^{\{A\}}(t)$ holds for all $t$ to verify that $N_{u}^{\{S\}}(t) \leq N_{u^\prime}^{\{S\}}(t)$ holds for all $t.$  In light of this, we argue that the inequality
	\begin{equation}
	\label{ineq:induct1}
	N_{u}^{\{A\}}(t) \leq N_{u_g}^{\{A\}}(t),
	\end{equation}
	holds for all $t \geq 0.$ Since arguing the validity of \eqref{ineq:induct1} directly is difficult, we also prove the validity of an energy storage inequality to help.  To be more precise, we show that
	\begin{equation}
	\label{ineq:induct2}
	B_{u}(t+1) \leq B_{u_g}(t+1) + \energythresh(N_{u_g}^{\{A\}}(t) - N_{u}^{\{A\}}(t)),
	\end{equation}
	holds for all $t,$ and so that if $N_{u_g}^{\{A\}}(t) = N_{u}^{\{A\}}(t),$ then it must be $B_u(t+1) \leq B_{u_g}(t+1).$
	
	By definition, we have that $B_{u}(0) = B_{u_g}(0),$ and so it follows that $N_{u}^{\{A\}}(0) \leq N_{u_g}^{\{A\}}(0),$ as by the equality of the battery levels, any transmission policy $u$ may supply a feedback signal to the communication channel at time $0$ only if the greedy transmission policy $u_g$ supplies a feedback signal to the communication channel at time $0.$  Moreover, it holds that $B_{u}(1) \leq B_{u_g}(1) + \energythresh(N_{u_g}^{\{A\}}(0) - N_u^{\{A\}}(0)),$
	as if both policies supply  a feedback signal to the transmitter, then it must hold that $B_u(1) \leq B_{u_g}(1),$ and if the transmission policy $u$ does not provide a feedback signal to the transmitter while $u_g$ does, we have that $B_{u}(1) \leq B_{u_g}(1) + \energythresh,$ which follows from noting that the system uses exactly $\energythresh$ units of energy under policy $u_g.$
	
	For purposes of compacting notation, let $\tau^{-} = \tau - 1 $ and $\tau^+ = \tau+1.$  Take the preceding argument as a base for induction, suppose that \eqref{ineq:induct1} and \eqref{ineq:induct2} hold for all $t \leq \tau-1.$  We can infer immediately that $N_{u}^{\{A\}}(\tau^{-}) \leq N_{u_g}^{\{A\}}(\tau^{-})$ and $B_{u}(\tau) \leq B_{u_g}(\tau) + \energythresh(N_{u_g}^{\{A\}}(\tau^{-}) - N_{u}^{\{A\}}(\tau^{-}))$ together imply that $N_{u}^{\{A\}}(\tau) \leq N_{u_g}^{\{A\}}(\tau)$ holds, since the transmission policy $u$ may only supply a feedback signal to the sensor at time $\tau$ if the greedy transmission policy $u_g$ does so as well.  It remains to prove that $B_{u}(\tau^{+}) \leq B_{u_g}(\tau^{+}) + \energythresh(N_{u_g}^{\{A\}}(\tau) - N_u^{\{A\}}(\tau))$
	holds for $t = \tau.$
	
	Define $\Delta_{u}^{\{A\}}(\tau) \triangleq N_u^{\{A\}}(\tau^+) - N_u^{\{A\}}(\tau).$  Observe that 
	\begin{equation}
	\label{ineq:induct4_1}
	\energy_{u}(\tau^+) \geq \energythresh \Delta_{u}^{\{A\}} (\tau),
	\end{equation}
	as if $\Delta_{u}^{\{A\}} (\tau)= 1,$ then $\energy_{u}(\tau^+) \geq \energythresh,$ and otherwise $\energy_{u}(\tau^+) \geq 0.$  By multiplying both sides of \eqref{ineq:induct4_1} by $-1$ and adding $\energythresh \Delta_{u_g}^{\{A\}}(\tau),$ we have
	\begin{equation*}
	\label{ineq:induct3}
	\energythresh\Delta_{u_g}^{\{A\}}(\tau) - \energy_{u}(\tau) \leq \energythresh[\Delta_{u_g}^{\{A\}}(\tau) - \Delta_{u}^{\{A\}} (\tau)].
	\end{equation*}
	By noting that $\energy_{u_g}(\tau)$ is defined as $\energythresh \Delta_{u_g}^{\{A\}}(\tau),$ we have
	\begin{equation*}
	\energy_{u_g}(\tau) - \energy_{u}(\tau) \leq \energythresh[\Delta_{u_g}^{\{A\}}(\tau)- \Delta_{u}^{\{A\}}(\tau)],
	\end{equation*}
	and finally
	\begin{equation}
	\label{ineq:induct4}
	\begin{aligned}
	& - \energy_{u}(\tau) + \energythresh (N_{u_g}^{\{A\}}(\tau^{-}) - N_{u}^{\{A\}}(\tau^{-}) )\leq \\
	& - \energy_{u_g}(\tau) + \energythresh(N_{u_g}^{\{A\}}(\tau) - N_{u}^{\{A\}}(\tau)),
	\end{aligned}
	\end{equation}
	which we use to finish the induction.  In particular, we have
	\begin{equation*}
	\begin{aligned}
	&B_{u}(\tau^+) \leq \\
	&\lproj B_{u_g}(\tau) + H(\tau) - \energy_u(\tau) + \energythresh(N_{u_g}^{\{A\}}(\tau^{-}) - N_{u}^{\{A\}}(\tau^{-}))\rproj_{0}^{\Bcap}\\
	& \leq\\
	&\lproj B_{u_g}(\tau) + H(\tau) - \energy_{u_g}(\tau) + \energythresh(N_{u_g}^{\{A\}}(\tau) - N_{u}^{\{A\}}(\tau))\rproj_{0}^{\Bcap}\\
	&\leq B_{u_g}(\tau^+) + \energythresh(N_{u_g}^{\{A\}}(\tau) - N_{u}^{\{A\}}(\tau)),
	\end{aligned}
	\end{equation*}
	where in the first inequality we have used the non-negativity of $\energythresh(N_{u_g}^{\{A\}}(\tau^{-}) - N_{u}^{\{A\}}(\tau^{-})),$ and in the second we have used \eqref{ineq:induct4}.  As this demonstrates that \eqref{ineq:induct2} holds with $t = \tau,$ the induction is complete.  Since $\omega$ was chosen arbitrarily, it holds for every $\omega \in \Omega,$ which completes the proof.
	\oprocend
\end{document}